\documentclass{amsart}

\usepackage{amssymb}
\usepackage{overpic}
\usepackage{enumitem}   
\usepackage{graphicx} 
\usepackage{amsrefs}
\usepackage{xcolor}
\usepackage[hidelinks]{hyperref}

\graphicspath{{Figures/}} %To find the file with the figures

\newtheorem{corollary}{Corollary} 
 
\newtheorem{lemma}{Lemma} 
\newtheorem{definition}{Definition} 
\newtheorem{example}{Example}

\newtheorem{main}{Theorem} %Main theorems, labelled with letters
\begin{document}
	
\title[Homoclinic Theorems for piecewise smooth vector fields]{Homoclinic Theorems for piecewise smooth vector fields}

\author[Claudio Buzzi, Paulo Santana, and Luan V. M. F. Silva]{Claudio Buzzi$^1$, Paulo Santana$^1$, and Luan V. M. F. Silva$^2$}

\address{$^1$IBILCE--UNESP, CEP 15054--000, S. J. Rio Preto, S\~ao Paulo, Brazil}
\email{claudio.buzzi@unesp.br; paulo.santana@unesp.br}

\address{$^2$IFTM, CEP 38305-200, Ituiutaba, Minas Gerais, Brazil}
\email{luan@iftm.edu.br}

\subjclass[2020]{Primary: 37C29. Secondary: 34A36 and 34C37}

\keywords{Horseshoe map; Homoclinic Theorem; $\lambda$-Lemma; piecewise smooth vector fields; homoclinic and heteroclinic orbits}

\begin{abstract}
	In this paper we provide extensions of the Birkhoff-Smale Homoclinic Theorem and the $\lambda$-Lemma for piecewise smooth vector fields free of sliding motion. We prove that if the vector field is $T$-periodic in its independent variable and has a transverse homoclinic point, then its time-$T$-map is conjugated with a Bernoulli Shift and thus the vector field is chaotic. The Bernoulli Shifts considered here may have any finite number of symbols, or countably infinite many.
\end{abstract}

\maketitle

\section{Introduction}%\label{Sec1}

When studying the 3-body problem, Poincaré~\cite[Chapter~$33$]{Poincare} observed the occurrence of transversal intersections between the stable and unstable manifolds of a hyperbolic point. Such an intersection was named a \emph{homocline} point~\cite[p.~$384$]{Poincare} and it was proved that the dynamics around it are quite rich, as for example one homocline point implies the existence of infinitely many~\cite[p.~$387$]{Poincare}. Nowadays such a point is known as \emph{homoclinic} point and plays a key role for chaos. 

The relation between homoclinic points and chaos was explored by Birkhoff~\cite[Chapter~$4$]{Birkhoff}, where it was proved for two-dimensional $C^1$-diffeomorphisms, that a homoclinic point of non-flat contact between the stable and unstable manifolds is accumulated by periodic points of increasing period~\cite[p.~$178$]{Birkhoff}. Birkhoff also claimed the existence of richer dynamics, with an \emph{``almost inconceivable hierarchy of solutions''}~\cite[p.~$184$ and~$185$]{Birkhoff}, including not only the periodic points, but also the asymptotic solutions to those periodic points (forward or backward in time), recurrent trajectories and their respective asymptotics.

This culminates in the seminal contribution of Smale~\cite[Theorem~$B$]{SmalePaper}, who proved that in a neighborhood of a transversal homoclinic point of a $n$-dimensional $C^1$-diffeomorphism there is an invariant Cantor set whose dynamics are topologically conjugated to a Bernoulli Shift of \emph{finitely} many symbols. Smale also proved that such a dynamics persists under small perturbations~\cite[Theorem~$A$]{SmalePaper}. These results are nowadays know as \emph{Birkhoff-Smale Theorem}~\cite[Theorem~$5.3.5$]{GukHol1983}, or \emph{Smale Homoclinic Theorem}~\cite[Theorem~$2.3$]{Newhouse}, and they were the foundation of the groundbreaking \emph{horseshoe map}~\cite[p.~$771$]{SmaleSurvey}. 

After that, Palis~\cite[Lemma~$1.1$]{Palis} proved that the stable and unstable manifolds of the transverse homoclinic point accumulate over themselves in a $C^1$-manner. This result is now known as \emph{$\lambda$-Lemma} and is the foundation of the notion of \emph{homoclinic tangle}. Besides that, the $\lambda$-Lemma became a keystone in the proof of an extension of Birkhoff-Smale Theorem regarding the hyperbolicity of the invariant set, meaning that every point in the invariant set has local stable and unstable manifolds that are carried with the base-point when iterating the diffeomorphism. Nowadays it is know that invariant set provided by the Birkhoff-Smale Theorem is hyperbolic, and it is common to include this information in modern versions of the theorem, see~\cite[Theorem~$5.3.5$]{GukHol1983} or~\cite[Theorem~$2.3$]{Newhouse}.

There is another extension for the Birkhoff-Smale Theorem, provided as far as we know by Moser~\cite[Theorem~$3.7$]{Moser}, although he says having learned it from Conley~\cite[p.~$100$]{Moser} (without mentioning a reference). Moser's Theorem is two-folded. First it is proved that there is an invariant set (not compact) whose dynamics are topologically conjugated to a Bernoulli Shift of \emph{countably infinite} many symbols. Then it is proved that this conjugation can be extended to the compactification of the invariant set, in order to include the points that ``escape'' for positive or negative time~\cite[p.~$75$]{Moser}. In particular, it is proved that the homoclinic points are dense~\cite[Theorem~$3.8$]{Moser} in this compactification.

From there on the theory flourish and generalizations appeared, such as for non-transversal homoclinic points~\cite{Rayskin} (and the references therein), normally hyperbolic manifolds~\cite{CreWig2015}, Lipschitz functions~\cite{Lip}, and piecewise smooth vector fields~\cites{BatFec2010,BatFec2011,CalFraPop2025}. Regarding the latter it is known that in the presence of sliding motion, a transversal homoclinic point does not guarantee chaos~\cite{FraPos2019}. 

However, as far as we know, only the original version of Birkhoff-Smale Theorem was provided for piecewise smooth vector field without the presence of sliding~\cite{BatFec2011}. In this paper we provide a new proof for this result, but that also includes the hyperbolicity of the invariant set, and the generalizations of Moser. A proof of the $\lambda$-Lemma is also provided, allowing, for example, the reduction of heteroclinic intersections to homoclinic ones. 

The paper is organized as follows. We begin with a preliminary Section~\ref{Sec2} for piecewise smooth vector fields, where some technical lemmas are stated. The gene\-ra\-lizations of the $\lambda$-Lemma and the Homoclinic Theorems are provided in Section~\ref{Sec3}. In Section~\ref{Sec4} we obtain a ``lift'' of these results for the piecewise smooth vector field itself, and apply it on an example. The technical lemmas are proved in Section~\ref{Sec5}. 

\section{Preliminary: Piecewise smooth vector fields}\label{Sec2}

Let $h_1,\dots,h_N\colon\mathbb{R}^n\to\mathbb{R}$ be $C^1$-functions, $\Sigma_i=h^{-1}_i(\{0\})$, and $\Sigma=\cup_{i=1}^{N}\Sigma_i$. Let $A_1,\dots,A_M$ be the open connected components of $\mathbb{R}^n\setminus\Sigma$, and $X_j$ an autonomous $C^1$-vector field defined in $\overline{A_j}$, $j\in\{1,\dots,M\}$, with the bar denoting the topological closure of a set. The piecewise smooth vector field is defined by
\begin{equation}\label{01}
	Z(x)=\left\{\begin{array}{ll}
		X_1(x), & \text{if } x\in \overline{A_1}, \\
		\dots & \\
		X_M(x), & \text{if } x\in \overline{A_M}.
	\end{array}\right.
\end{equation}
We denote system~\eqref{01} by $Z=(X_1,\dots,X_M,\Sigma)$. We say that each $X_j$ is a \emph{component} of $Z$, while $\Sigma$ is its \emph{switching set}. Observe that $Z$ is multi-valued on $\Sigma$. Thus, a transition rule among the components of $Z$ across $\Sigma$ is necessary. Here we adopt Filippov's convention~\cite{FilippovBook} and its generalization~\cite{PanSil2017} for such rules. 

An important definition in Filippov's convention is that of \emph{crossing points}. To detect such points, let us introduce the derivative of $h_i$ with respect to $X_j$ as follows:
\[
	X_jh_i(x):=\left<X_j(x),\nabla h_i(x)\right>,
\]
where $\left<\cdot,\cdot\right>$ denotes the standard inner product of $\mathbb{R}^n$. We say that $x\in\Sigma$ is a \emph{crossing point} of $Z$ if the following conditions are satisfied.
\begin{enumerate}[label=(\roman*)]
	\item\label{i} There is a unique $i\in\{1,\dots,N\}$ such that $x\in\Sigma_i$.
	\item\label{ii} $\nabla h_i(x)\neq0$.
	\item\label{iii} $X_ah_i(x)X_bh_i(x)>0$, with $X_a$, $X_b$ the two components of $Z$ defined in $x$.
\end{enumerate}
In order to define a trajectory (i.e. a solution) of~\eqref{01} through a crossing point $x\in\Sigma$, it is enough to concatenate the trajectories of $X_a$ and $X_b$ through the point, where $X_a$ and $X_b$ are the only two components of $Z$ defined over $x$. 

Let $\Sigma_c\subset\Sigma$ be the subset of crossing points of $Z$. If $x\in\Sigma\setminus\Sigma_c$, then $Z$ may have multiple different trajectories defined through it. Since we will not work with such points in this paper, we will not dive into it. Given $x\in(\mathbb{R}^n\setminus\Sigma)\cup\Sigma_c$, let $\varphi(t,x)$ be the local trajectory of $Z$ through $x$, satisfying $\varphi(0,x)=x$. Observe that $\varphi$ can be uniquely extended as long it does not intersect $\Sigma\setminus\Sigma_c$. Hence, from now $\varphi(t,x)$ is defined as the maximum trajectory of $Z$ through $x$, that does not intersect $\Sigma\setminus\Sigma_c$. 

\begin{lemma}\label{P1}
	Let $x\in\mathbb{R}^n\setminus\Sigma$ and $T>0$ such that $\{\varphi(t,x)\colon t\in[0,T]\}$ intersects $\Sigma$ only in crossing points. Then such an intersection is finite.
\end{lemma}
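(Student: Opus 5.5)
We argue by contradiction, using compactness of the time interval and the local structure of the flow near a crossing point. Suppose the set
\[
	I=\{t\in[0,T]\colon \varphi(t,x)\in\Sigma\}
\]
is infinite. Since $[0,T]$ is compact, $I$ has an accumulation point $t^*\in[0,T]$. By continuity of $t\mapsto\varphi(t,x)$ (trajectories through crossing points are concatenations of trajectories of the components, hence continuous) and closedness of $\Sigma=\cup_i h_i^{-1}(\{0\})$, we get $p=\varphi(t^*,x)\in\Sigma$. By hypothesis, $p$ is then a crossing point.

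The key step is to show that crossing points are isolated along the trajectory, which contradicts $t^*$ being an accumulation point. At $p$, condition~\ref{i} gives a unique $i$ with $p\in\Sigma_i$. Since each $\Sigma_k$ is closed, there is a neighborhood $U$ of $p$ with $U\cap\Sigma=U\cap\Sigma_i$. By~\ref{ii} and the implicit function theorem, $U\cap\Sigma_i$ is a $C^1$ hypersurface separating $U$ into $\{h_i>0\}$ and $\{h_i<0\}$. Each side lies in a single region $A_a$ or $A_b$; these are the two components defined at $p$.

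By~\ref{iii}, and shrinking $U$, we may assume $X_ah_i$ and $X_bh_i$ have the same strict sign on $U$, say positive. Take $\varepsilon>0$ with $\varphi(t,x)\in U$ for $|t-t^*|<\varepsilon$. On each subinterval where the trajectory stays in one closed side, it is an integral curve of $X_a$ or $X_b$. Hence $g(t)=h_i(\varphi(t,x))$ is piecewise $C^1$, with derivative $X_ah_i$ or $X_bh_i$ evaluated along the curve, both positive. So $g$ is strictly increasing on $(t^*-\varepsilon,t^*+\varepsilon)$, and $g(t)=0$ only at $t=t^*$. Thus $I\cap(t^*-\varepsilon,t^*+\varepsilon)=\{t^*\}$, which is the desired contradiction.

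The main obstacle is making this monotonicity argument rigorous without circularity, since a priori the trajectory might switch sides infinitely often near $t^*$. I would handle it with a one-sided argument. Let $t_1<t_2$ be in the window with $g(t_1)=0$, and suppose $g$ failed to be positive just after $t_1$. Right after $t_1$ the trajectory is a solution of one of $X_a$, $X_b$ leaving $\Sigma_i$, and at a zero of $g$ the derivative $g'(t_1^+)$ is one of $X_ah_i$, $X_bh_i$, hence $>0$. So $g>0$ on some $(t_1,t_1+\delta)$. Similarly $g<0$ just before any zero. Now suppose two consecutive zeros $t_1<t_2$ of $g$ existed in the window. On $(t_1,t_2)$ we have $g>0$ near $t_1$ and $g<0$ near $t_2$, so by continuity $g$ has another zero strictly between them, contradicting consecutiveness. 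To avoid assuming zeros are discrete, let $s=\inf\{t>t_1\colon g(t)=0\}$. If $s=t_1$, this contradicts $g>0$ on $(t_1,t_1+\delta)$. Otherwise $g>0$ on $(t_1,s)$ while $g<0$ just before $s$, which is impossible. Hence $g$ vanishes at most once in the window, as claimed.
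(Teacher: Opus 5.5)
Your proof is correct and follows the same skeleton as the paper's: take an accumulation point $t^*$ of hitting times, observe that $\varphi(t^*,x)$ must be a crossing point, and use local transversality to show that hits of $\Sigma$ near $t^*$ are isolated. The only difference is the local step: the paper gets a uniform escape time $\eta$ from continuous dependence and the Flow Box Theorem applied to the departing field, while you show that $g=h_i\circ\varphi(\cdot,x)$ changes sign from negative to positive at every zero in the window and then use an infimum/intermediate-value argument, which is slightly more elementary and handles accumulation from either side at once.
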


Notice if $x\in\Sigma_c$, then there is a neighborhood $A$ of $x$ such that $(A\cap\Sigma)\subset\Sigma_c$. Hence, it follows from Lemma~\ref{P1} that if $x\in\mathbb{R}^n\setminus\Sigma$ is such that $\{\varphi(t,x)\colon t\in[0,T]\}$ intersects $\Sigma$ only in crossing points, then there is a neighborhood $A\subset\mathbb{R}^n\setminus\Sigma$ of $x$ such that for every $r\in A$, $\varphi(t,r)$ is defined for $t\in[0,T]$, and intersects $\Sigma$ only in crossing points. This allows the definition of the time-$T$-map $\Phi^T\colon A\to\mathbb{R}^n$, given by $\Phi^T(r)=\varphi(T,r)$.

\begin{lemma}\label{P2}
	Let $x\in\mathbb{R}^n\setminus\Sigma$ and $T>0$ such that $\{\varphi(t,x):t\in[0,T]\}$ intersects $\Sigma$ only in crossing points, and $\varphi(T,x)\not\in\Sigma$. Then $\Phi^T$ is a $C^1$-diffeomorphism in a neighborhood of $x$.
\end{lemma}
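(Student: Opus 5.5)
By Lemma~\ref{P1}, the trajectory $\{\varphi(t,x)\colon t\in[0,T]\}$ meets $\Sigma$ at finitely many crossing points. Call them $p_k=\varphi(t_k,x)$, $k=1,\dots,m$, with $0<t_1<\dots<t_m<T$; the strict inequalities hold because $x\notin\Sigma$ and $\varphi(T,x)\notin\Sigma$. The plan is to write $\Phi^T$ near $x$ as a composition of $C^1$ maps, each with invertible derivative, and then apply the inverse function theorem. Since $\Phi^T$ is injective (uniqueness of trajectories through non-switching and crossing points, run backward as well as forward), the inverse function theorem will give a $C^1$-diffeomorphism onto an open image.

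\emph{Step 1: local transition maps.} Fix $k$ and let $p_k\in\Sigma_i$, where $i$ is unique, $\nabla h_i(p_k)\neq0$, and $X_a h_i(p_k)X_b h_i(p_k)>0$. Here $X_a$ is the component used just before $t_k$ and $X_b$ the one used just after. Since $X_a$ is $C^1$ on $\overline{A_a}$, extend it to a $C^1$ field near $p_k$ (or use that the flow is only needed on one side); denote its flow by $\phi_a$. Because $X_ah_i(p_k)\neq0$, the implicit function theorem applied to $(s,y)\mapsto h_i(\phi_a(s,y))$ gives a $C^1$ hitting-time function $\tau_k(y)$, defined near the point $y_k=\varphi(t_k-\varepsilon,x)$, with $h_i(\phi_a(\tau_k(y),y))=0$. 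By condition~\ref{i} and continuity, no other $\Sigma_j$ is met near $p_k$, and nearby points of $\Sigma_i$ remain crossing points.

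\emph{Step 2: decomposition.} Choose small $\varepsilon>0$ and intermediate times $s_k$ so that $\Phi^T$ near $x$ equals a composition of the following maps. First come flows of a single component over fixed times along pieces that avoid $\Sigma$; these are $C^1$ diffeomorphisms by standard ODE theory. Between them come crossing maps of the form
\[
y\mapsto \phi_b\bigl(2\varepsilon-\tau_k(y),\,\phi_a(\tau_k(y),y)\bigr),
\]
each of which is $C^1$ by Step~1. Pieces that avoid $\Sigma$ do so uniformly for nearby initial points by compactness and continuity, so the composition is valid on a neighborhood of $x$.

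\emph{Step 3 (main obstacle): invertibility of the crossing map's derivative.} Differentiate at $y_k$, writing $q=\phi_a(\tau_k(y),y)$. The derivative is
\[
D\phi_b\cdot\bigl(X_b(q)\,(-D\tau_k)+X_a(q)\,D\tau_k+D\phi_a\bigr)
= D\phi_b\bigl(D\phi_a+(X_a-X_b)(q)\,D\tau_k\bigr).
\]
Implicit differentiation gives $D\tau_k=-\nabla h_i(q)^{T}D\phi_a\,/\,X_ah_i(q)$. Hence the middle factor is
\[
\Bigl(I-\frac{(X_a-X_b)(q)\,\nabla h_i(q)^T}{X_ah_i(q)}\Bigr)D\phi_a,
\]
a rank-one perturbation of the identity composed with $D\phi_a$. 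By the matrix determinant lemma, the first factor has determinant
\[
1-\frac{(X_a-X_b)h_i(q)}{X_ah_i(q)}=\frac{X_bh_i(q)}{X_ah_i(q)},
\]
which is nonzero, indeed positive, by condition~\ref{iii}. The flows $\phi_a,\phi_b$ have invertible derivatives, so every factor in the composition has invertible derivative. The chain rule then gives $\det D\Phi^T(x)\neq0$, and the inverse function theorem together with injectivity completes the proof.
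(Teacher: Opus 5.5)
Your proof is correct. Its skeleton matches the paper's: finitely many crossings by Lemma~\ref{P1}, a $C^1$ hitting time from the implicit function theorem, and $\Phi^T$ written as a composition of fixed-time flows and crossing maps.

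The difference is in how you obtain invertibility. The paper never differentiates. It observes that the crossing hypothesis is symmetric under time reversal, so the same implicit-function argument run backward from $\Phi^T(x)$ shows that $\Phi^{-T}$ is $C^1$ near $\Phi^T(x)$. Since $\Phi^{-T}$ inverts $\Phi^T$, this gives a $C^1$ inverse directly.

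You instead differentiate each crossing map. You identify the middle factor as the saltation matrix $I+(X_b-X_a)(q)\nabla h_i(q)^T/X_ah_i(q)$ and use the matrix determinant lemma to get its determinant $X_bh_i(q)/X_ah_i(q)>0$. The inverse function theorem then finishes the argument. Your computation is right, including the identity $\partial_t\phi_b=D\phi_b\,X_b(q)$ that you use implicitly.

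The paper's route is shorter and purely qualitative. Yours uses only forward flows and gives more information: an explicit Jacobian of $\Phi^T$, and the fact that $\Phi^T$ is orientation preserving. That would be useful, for instance, when computing $D\Phi^T$ at a fixed point to check hyperbolicity.

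One small remark: your separate global injectivity claim is not needed. The inverse function theorem already gives a $C^1$-diffeomorphism on some neighborhood of $x$, which is all the lemma asserts.
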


We now show that the hypotheses $x\not\in\Sigma$ and $\varphi(T,x)\not\in\Sigma$ are essential.

\begin{example}
	Let $Z=(X^+,X^-,\Sigma)$, with $X^+(x,y)=(0,1)$, $X^-(x,y)=(1,1)$, and $\Sigma=\{(x,y)\in\mathbb{R}^2\colon y=0\}$. The solutions of $X^\pm$ are given by
	\begin{equation*}%\label{2}
		\varphi^+(t,x,y)=(x,t+y)\quad \mbox{and} \quad  \varphi^-(t,x,y)=(t+x,t+y).
	\end{equation*}
	It is not hard to see that the solution $\varphi(t,x,y)$ of $Z$ is given by
	\begin{equation}\label{3}
		\varphi(t,x,y)=\left\{\begin{array}{ll}
			(t+x,t+y) & \text{if } y\leqslant0 \text{ and } t\leqslant -y, \\
			(x-y,t+y) & \text{if } y\leqslant0 \text{ and } t\geqslant -y, \\
			(x,t+y) & \text{if } y>0 \text{ and } t\geqslant -y, \\
			(t+x+y,t+y) & \text{if } y>0 \text{ and } t\leqslant -y.
		\end{array}\right.
	\end{equation}
	Given $T>0$, it follows from \eqref{3} that
	\[
		\lim\limits_{y\to0^+}\frac{\Phi^T(0,y)-\Phi^T(0,0)}{y}=(0,1), \quad \lim\limits_{y\to0^-}\frac{\Phi^T(0,y)-\Phi^T(0,0)}{y}=(-1,1).
	\]
	Hence $\Phi^T$ is not differentiable at the origin, for  every $T>0$.
\end{example}

Notice that even if $\Phi^T$ is not of class $C^1$ everywhere, it will be continuous.

\begin{lemma}\label{P3}
	Let $x\in(\mathbb{R}^n\setminus\Sigma)\cup\Sigma_c$, and $T>0$ such that $\{\varphi(t,x):t\in[0,T]\}$ intersects $\Sigma$ only in crossing points. Then $\Phi^T$ is a homeomorphism in a neighborhood of $x$.
\end{lemma}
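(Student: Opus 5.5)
We prove Lemma~\ref{P3} by writing $\Phi^T$ near $x$ as a finite composition of local homeomorphisms. Each factor is either a time-$s$ map of one smooth component $X_j$, or a \emph{transition map} between local transversal sections that carries a point across a crossing point of $\Sigma$. Continuity and local injectivity will follow from this decomposition. The inverse will be obtained the same way by flowing backward: if $y=\varphi(T,x)$, then $\varphi(t,y)$, $t\in[-T,0]$, retraces the orbit and meets $\Sigma$ only at the same crossing points. Hence $\Phi^{-T}$ is defined and continuous near $y$ by the same argument, and $\Phi^{-T}\circ\Phi^T=\mathrm{id}$ near $x$. So it suffices to prove that $\Phi^T$ is continuous on a neighborhood of $x$ and $\Phi^{-T}$ is continuous on a neighborhood of $\Phi^T(x)$.

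\textbf{Step 1 (finitely many crossings).} If $x\notin\Sigma$, Lemma~\ref{P1} gives finitely many crossing times $0<t_1<\dots<t_k\leqslant T$. If $x\in\Sigma_c$, we flow slightly backward or forward within the neighborhood $A$ with $A\cap\Sigma\subset\Sigma_c$ to reduce to a point off $\Sigma$, and again get finitely many crossings. The neighborhood $A$ is available because $\Sigma_c$ is open in $\Sigma$: conditions \ref{i}--\ref{iii} are open conditions by the continuity of $h_i$, $\nabla h_i$ and $X_j$.

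\textbf{Step 2 (local crossing times depend continuously on the point).} At each crossing point $p_m=\varphi(t_m,x)\in\Sigma_i$ we have $\nabla h_i(p_m)\neq0$ and $X_ah_i(p_m)\neq0$. Apply the implicit function theorem to $(t,r)\mapsto h_i(\varphi_a(t,r))$, where $\varphi_a$ is the flow of the component $X_a$ extended to a $C^1$ field near $p_m$. This gives a continuous (indeed $C^1$) hitting time $\tau_m(r)$ for $r$ near $\varphi(t_{m-1}+\delta,x)$. By condition~\ref{iii}, the orbit of $X_b$ leaving $\Sigma$ from the hitting point enters the other region, so the trajectory of $Z$ through $r$ is the concatenation. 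Continuous dependence of the $C^1$ flows on initial conditions and time then shows that $r\mapsto\varphi(t,r)$ is continuous near $x$ for each fixed $t\in[0,T]$. The key estimate is
\[
\varphi(t,r)=\varphi_b\bigl(t-\tau(r),\varphi_a(\tau(r),r)\bigr)\quad\text{for } t\geqslant\tau(r),
\]
with the symmetric formula for $t<\tau(r)$. Both expressions are continuous in $r$ and agree on $\{t=\tau(r)\}$, so the pasting lemma gives continuity even when $t$ is close to $t_m$.

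\textbf{Step 3 (induction along the orbit, and the main obstacle).} Induct over $m=1,\dots,k$, shrinking the neighborhood finitely many times. The main obstacle is the cases where $t_m=T$ or $x\in\Sigma$. There $\Phi^T$ lands on or starts from $\Sigma$, and nearby points lie on different sides of $\Sigma$, so the flow formula switches between the $\varphi_a$-branch and the $\varphi_b$-branch; this is exactly the situation of the Example, where $\Phi^T$ fails to be differentiable. The pasting argument of Step~2 handles it, since the two closed pieces $\{t\leqslant\tau(r)\}$ and $\{t\geqslant\tau(r)\}$ cover the neighborhood and the formulas coincide on their overlap. Finally, we combine continuity of $\Phi^T$ and $\Phi^{-T}$ with the identities $\Phi^{-T}\circ\Phi^T=\mathrm{id}$ and $\Phi^T\circ\Phi^{-T}=\mathrm{id}$. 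These hold by uniqueness of trajectories through non-sliding points. Together they show that $\Phi^T$ is a homeomorphism from a neighborhood of $x$ onto an open neighborhood of $\Phi^T(x)$.
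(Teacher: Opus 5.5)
Your proposal is correct and essentially matches the paper's proof: an implicit-function hitting time with the Pasting Lemma for the cases where the orbit starts or ends on $\Sigma$, composition over the finitely many intermediate crossings, and continuity of $\Phi^{-T}$ near $\varphi(T,x)$ by the same argument to obtain the continuous inverse. One precision for the case $x\in\Sigma$: the pieces to paste are $\{\tau(r)\geqslant0\}$ and $\{\tau(r)\leqslant0\}$, with $\Phi^T(r)=\varphi_b(T,r)$ on the second piece, as the paper does; your Step~2 formula would run $\varphi_a$ backward when $\tau(r)<0$, and that is not the true trajectory there.
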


The proofs of Lemmas~\ref{P1},~\ref{P2} and~\ref{P3} are postponed to Section~\ref{Sec5}.

\section{Piecewise $\lambda$-Lemma, Homoclinic Theorems and Chaos}\label{Sec3}

Let $\mathcal{U}$, $\mathcal{V}\subset\mathbb{R}^n$ be open sets, $\Omega_1,\dots,\Omega_N\subset\mathbb{R}^n$ immersed sub-manifolds of codimension one, and $\Omega=\cup_{i=1}^{N}\Omega_i$. 
\begin{enumerate}[label=$(H)$]
	\item\label{H} We say that a homeomorphism $F\colon\mathcal{U}\to \mathcal{V}$ satisfies hypothesis $(H)$ if the following conditions are satisfied.
	\begin{enumerate}[label=(\roman*)]
		\item $F$ is of class $C^1$ in $ \mathcal{U}\setminus\Omega$.
		\item If $p\in\mathcal{U}\setminus\Omega$, $\{F(p),\dots,F^{k-1}(p)\}\subset \mathcal{U}$, and $F^k(p)\in\mathcal{V}\setminus\Omega$, then $F^k$ is diffeomorphism of class $C^1$ in a neighborhood of $p$.
		\item $F^{-1}$ satisfies properties $(i)$ and $(ii)$, interchanging $ \mathcal{U}$ and $ \mathcal{V}$.
	\end{enumerate}
\end{enumerate}
We remark from Lemmas~\ref{P2} and~\ref{P3} that the time-$T$-map $\Phi^T$ of a piecewise smooth vector field $Z$ free of sliding motion satisfies hypothesis~\ref{H}, which is the goal of this paper. We introduced such a hypothesis to simplify the statements the results. 

\subsection{Piecewise $\lambda$-Lemma}

We say that a fixed point $p\in\mathcal{U}\setminus\Omega$ of $F$ is \emph{hyperbolic} if none of the eigenvalues of $DF(p)$ has norm equal to $1$. In this case, it follows from the Stable Manifold Theorem~\cite[Theorems~$2.3$ and~$2.4$]{HirPugh} that $p$ has well-defined local stable and unstable $C^1$-manifolds $W^s_{loc}(p)$, $W^u_{loc}(p)$ with transversal intersection in $p$, and whose dimensions $s\geqslant0$ and $u\geqslant0$ satisfies $s+u=n$. The global stable and unstable manifolds are given by $W^s(p)=\bigcup_{i\leqslant0}F^i\big(W^s_{loc}(p)\big)$, and $W^u(p)=\bigcup_{i\geqslant0}F^i\big(W^u_{loc}(p)\big)$. Notice from Lemma~\ref{P1} that $W^{s,u}(p)$ are of class $C^1$ almost everywhere.

Given $D\subset\mathbb{R}^n$, we say that it is a \emph{$k$-disk} if it is diffeomorphic to a closed ball of $\mathbb{R}^k$. If $D$ is diffeomophic to an open ball of $\mathbb{R}^k$ and $r\in D$, then we say that it is a $k$-neighborhood of $r$.

\begin{main}[Piecewise $\lambda$-Lemma]\label{M1}
	Let $F$ satisfy hypothesis~\ref{H} and $p\in\mathcal{U}\setminus\Omega$ be a hyperbolic fixed point of $F$. Let $\Delta$ be a $k$-disk intersecting $W^s(p)$ transversely at some point $q\in\mathcal{U}\setminus\Omega$. Then, for every $u$-disk $D\subset W^u(p)$ the set $\cup_{i\geqslant0}F^i(\Delta)$ contains $u$-disks arbitrarily close in the $C^1$-topology (resp. $C^0$-topology) to $D$ when $D\cap\Omega=\emptyset$ (resp. $D\cap\Omega\neq\emptyset$).
\end{main}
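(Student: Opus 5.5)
The plan is to reduce the statement to the classical $\lambda$-Lemma of Palis, applied in a small neighborhood of $p$ where $F$ is smooth, and then to push the resulting disks out along $W^u(p)$ using hypothesis~\ref{H}.

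\textbf{Step 1 (local smooth setting).} Since $p\in\mathcal{U}\setminus\Omega$ and $\Omega$ is closed near $p$, we fix a neighborhood $V\subset\mathcal{U}\setminus\Omega$ of $p$ on which $F$ is a $C^1$-diffeomorphism onto its image. We choose local stable and unstable disks $W^s_{loc}(p),W^u_{loc}(p)\subset V$.

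\textbf{Step 2 (bring $\Delta$ near $p$).} Since $q\in W^s(p)$, there is $m\geqslant0$ with $F^m(q)\in W^s_{loc}(p)$, and $F^j(q)\in W^s(p)$ for all $j$. We may shrink $\Delta$ to a small $k$-disk around $q$. If $F^m(q)\notin\Omega$, hypothesis~\ref{H}(ii) gives that $F^m$ is a $C^1$-diffeomorphism near $q$. Then $F^m(\Delta)$ is a $k$-disk through $F^m(q)$, and it is transverse to $W^s_{loc}(p)$, because diffeomorphisms preserve transversality and $F^m(W^s)=W^s$ locally. This holds because $F^m(q)\in V\subset\mathcal{U}\setminus\Omega$.

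\textbf{Step 3 (local $\lambda$-Lemma).} We apply Palis' $\lambda$-Lemma to the $C^1$-diffeomorphism $F|_V$ and the disk $F^m(\Delta)$. This gives disks $D_j\subset F^{m+j}(\Delta)$ that converge in the $C^1$-topology to $W^u_{loc}(p)$. A technical point is that the classical statement assumes a global diffeomorphism. We handle this either by extending $F|_V$ to a global $C^1$-diffeomorphism coinciding with $F$ near $p$, or by checking that the graph-transform proof only uses iterates inside $V$. The latter is true: we restrict to the pieces of $F^{j}(F^m\Delta)$ that remain in a box around $p$.

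\textbf{Step 4 (transport to $D$).} Let $D\subset W^u(p)$ be a $u$-disk. By compactness there is $\ell$ with $D\subset F^{\ell}(W^u_{loc}(p))$, and we may take a slightly larger local disk $E\subset W^u_{loc}(p)$ with $F^\ell(E)\supset D$.

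If $D\cap\Omega=\emptyset$, we first shrink the situation to points of $E$ whose forward orbit up to time $\ell$ lands outside $\Omega$ at time $\ell$. By hypothesis~\ref{H}(ii), $F^\ell$ is then a $C^1$-diffeomorphism on a neighborhood of $F^{-\ell}(D)$; compactness is used here, together with the finitely many points where intermediate iterates hit $\Omega$ (Lemma~\ref{P1}). It follows that $F^\ell(D_j)$ converges to $F^\ell(E)$ in $C^1$ on the part mapping onto $D$. This yields $u$-disks in $F^{m+j+\ell}(\Delta)$ that are $C^1$-close to $D$.

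If $D\cap\Omega\neq\emptyset$, $F^\ell$ is only a homeomorphism. Uniform continuity of $F^\ell$ on a compact neighborhood of $E$ then turns $C^0$ (indeed $C^1$) closeness of $D_j$ to $E$ into $C^0$ closeness of $F^\ell(D_j)$ to $F^\ell(E)$. The sets $F^\ell(D_j)$ are topological $u$-disks, and they are genuine disks by the piecewise $C^1$ structure away from a measure-zero set.

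\textbf{Main obstacle.} The main obstacle is Step 2 when intermediate iterates $F^j(q)$ lie in $\Omega$, so that $F^m$ is not differentiable near $q$ and transversality could be destroyed. Hypothesis~\ref{H}(ii) is precisely what rescues this. It requires only that the endpoints $q$ and $F^m(q)$ avoid $\Omega$, and both do, since $q\in\mathcal{U}\setminus\Omega$ by assumption and $F^m(q)\in V$. So I expect this to go through, with the remaining care being the uniformity argument in Step 4.
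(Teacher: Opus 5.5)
Your proposal is correct and follows essentially the same route as the paper: use hypothesis~$(H)$(ii) to move the transverse intersection into a smooth neighborhood of $p$, apply (the proof of) Palis' local $\lambda$-Lemma there, and then transport the approximating disks out to $D$ by an iterate of $F$. That iterate is a $C^1$-diffeomorphism near $F^{-\ell}(D)$ when $D\cap\Omega=\emptyset$, and only a homeomorphism otherwise.

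Two of your side remarks are not needed. The appeal to Lemma~\ref{P1} plays no role, since that lemma concerns flows and $(H)$(ii) only constrains the endpoints. The claim that the $F^\ell(D_j)$ are ``genuine disks by the piecewise $C^1$ structure'' is unjustified; in the $C^0$ case, ``$u$-disk'' should simply be read topologically, as the paper implicitly does.
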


It follows from Theorem~\ref{M1} that if $\Delta\subset\mathcal{U}$ is a sub-manifold intersecting $W^s(p)$ transversely outside $\Omega$, then $W^u(p)\subset\overline{\cup_{i\geqslant0}F^i(\Delta)}$. Hence, if $W\subset\mathcal{U}$ is an invariant sub-manifold such that $\Delta\subset W$, then $W$ accumulates at $W^u(p)$, see Figure~\ref{Fig6}$(a)$. 
\begin{figure}[ht]
	\begin{center}
		\begin{minipage}{6cm}
			\begin{center}
				\begin{overpic}[height=4cm]{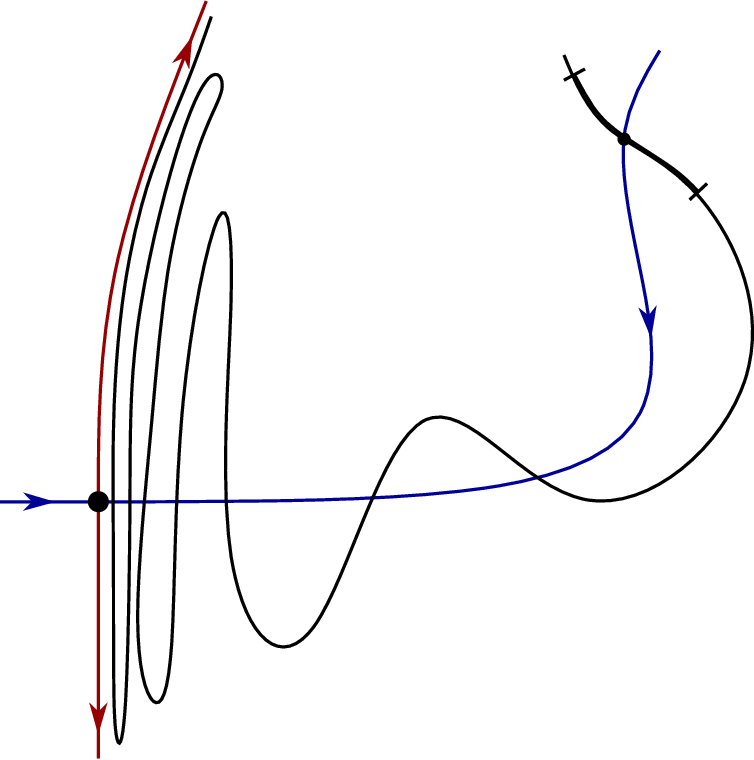} 
				%\begin{overpic}[height=4cm,grid,tics=5]{Fig8.eps}
					\put(7,28){$p$}
					\put(76,77){$q$}
					\put(91,77){$\Delta$}
					\put(59,55){$W^s(p)$}
					\put(-10,70){$W^u(p)$}
					\put(44,15){$W$}
				\end{overpic}	
							
				$(a)$
			\end{center}
		\end{minipage}
		\begin{minipage}{6cm}
			\begin{center}
				\begin{overpic}[height=4cm]{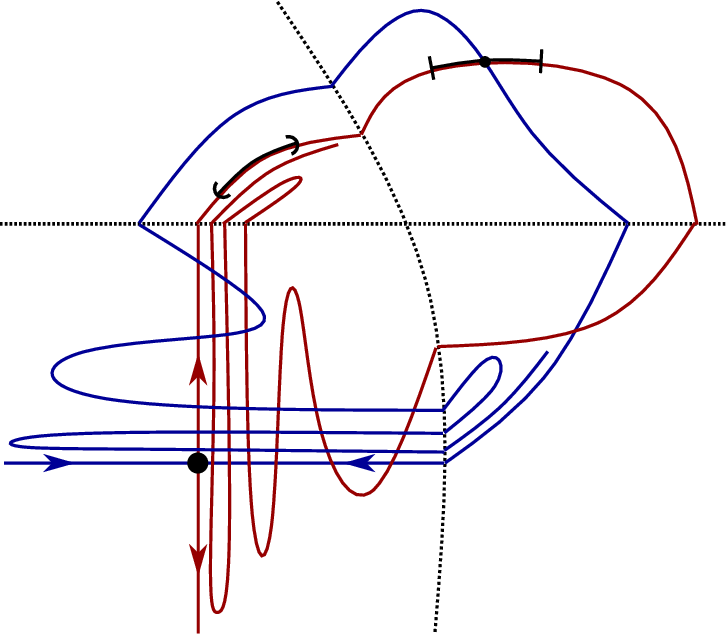} 
				%\begin{overpic}[height=cm,grid,tics=5]{Fig15.eps}
					\put(22,18){$p$}
					\put(63,74){$q$}
					\put(3,65){$W^s(p)$}
					\put(89,42){$W^u(p)$}
					\put(57,57.75){$\Omega$}
					\put(72,81){$\Delta$}
					\put(31,66){$D$}
				\end{overpic}
								
				$(b)$
			\end{center}
		\end{minipage}
	\end{center}
\caption{Illustration of Theorem~\ref{M1}.}\label{Fig6}
\end{figure} 
In particular, if $W=W^u(p)$, then it occurs a \emph{homoclinic tangle}, see Figure~\ref{Fig6}$(b)$.

\begin{proof}[Proof of Theorem~\ref{M1}]
The bulk of the proof of Theorem~\ref{M1} is local. First we consider a small enough neighborhood $U\subset\mathcal{U}\setminus\Omega$ in which coordinates of the Stable Manifold Theorem can be assumed. Then, we iterate $q$ sufficiently many times, say $k$ times, until it enters $U$, see Figure~\ref{Fig3}.
\begin{figure}[ht]
	\begin{center}
		\begin{overpic}[height=5cm]{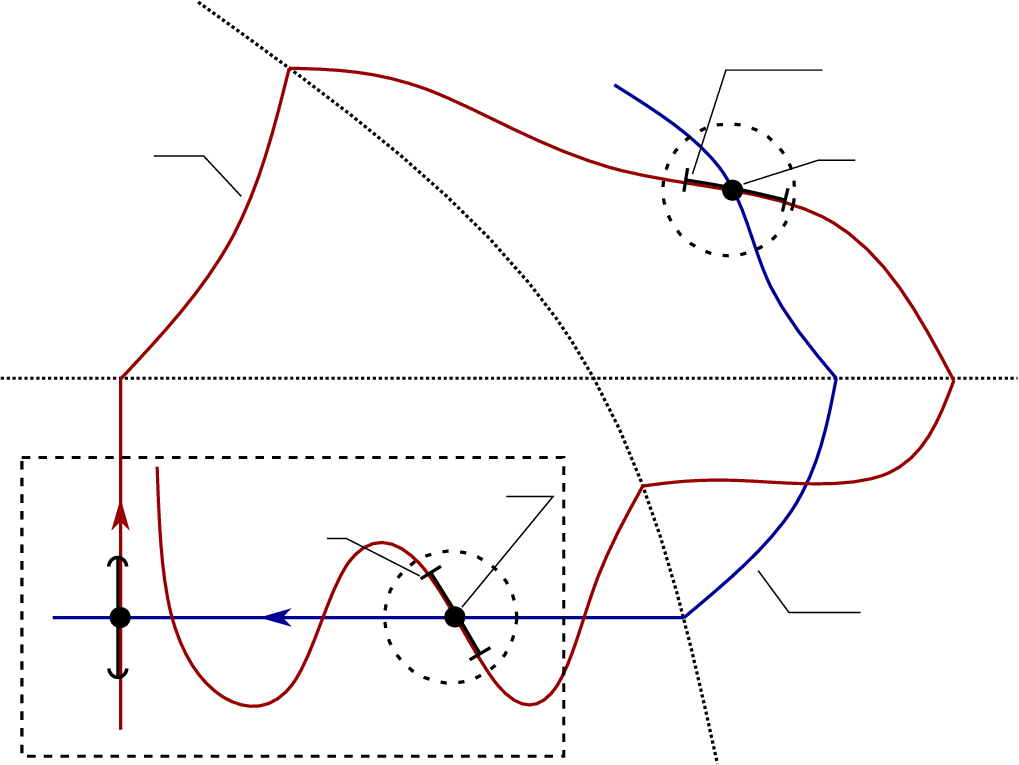} 
			%\begin{overpic}[height=5cm,grid,tics=5]{Fig4.eps} 
			\put(84.75,58.25){$q$}
			\put(61,49){$B$}
			\put(81,66.5){$\Delta$}
			\put(51,39){$\Omega$}
			\put(33,3){$F^k(B)$}
			\put(36,24.75){$F^k(q)$}
			\put(16.5,20.25){$F^k(\Delta)$}
			\put(8,11){$p$}
			\put(5.5,18){$D$}
			\put(57,0){$U$}
			\put(-1,58.25){$W^u(p)$}
			\put(85,13.25){$W^s(p)$}
		\end{overpic}
	\end{center}
	\caption{Illustration of the proof of Theorem~\ref{M1}.}\label{Fig3}
\end{figure}
Since $q$, $F^k(q)\not\in\Omega$ we have from hypothesis~\ref{H} that $F^k$ is a $C^1$-diffeomorphism in a neighborhood $B$ of $q$. Hence, we can assume without loss of generality that the transverse intersection occurs inside $U$. One now apply the proof of the $\lambda$-Lemma~\cite[Lemma~$7.1$]{PalisBook}, obtaining that the iterates $F^i(\Delta)$ gets arbitrarily $C^1$-close to a $u$-disk $D\subset W^u(p)\subset U$. The general case where $D\subset W^u(p)$ is any $u$-disk, is as follows. Let $D\subset W^u(p)$ be a $u$-disk and observe that the iterates $F^{-m}(D)$ get arbitrarily close to $p$. If $D\cap\Omega=\emptyset$, then $F^{-m}$ is a $C^1$-diffeomorphism in a neighborhood of $D$ for $m\in\mathbb{N}$ big enough. From the previous case we can take $F^i(\Delta)$ arbitrarily $C^1$-close to $F^{-m}(D)$ and thus $F^{n+m}(\Delta)$ contain a $u$-disk arbitrarily $C^1$-close to $D$. If $D\cap\Omega\neq\emptyset$, then $F^m$ may not be of class $C^1$, but is still a homeomorphism and thus we can approximate in the $C^0$-topology. 
\end{proof}

The $\lambda$-Lemma is also used to reduce a heroclinic intersection to a homoclinic one. Given two hyperbolic fixed points $p_1$, $p_2\in\mathcal{U}\setminus\Omega$, we say that $p_1$ is related with $p_2$ if $W^u(p_1)$ has a transversal intersection with $W^s(p_2)$ outside $\Omega$. We denote such a relation by $p_1\sim p_2$. Notice that if $p_1\sim p_2$ and $p_2\sim p_3$, then it follows directly from the $C^1$-approximation provided by Theorem~\ref{M1} that $p_1\sim p_3$. See Figure~\ref{Fig11} and also~\cite[p.~$85$, Corollary~$1$]{PalisBook}.
\begin{figure}[ht]
	\begin{center}
		\begin{overpic}[height=4cm]{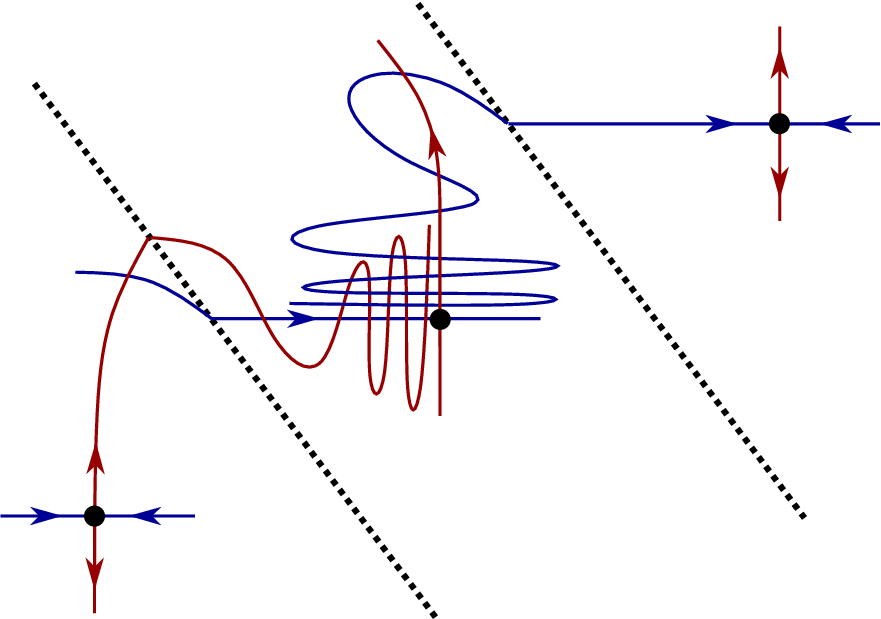} 
			%\begin{overpic}[height=4cm,grid,tics=5]{Fig14x.eps}
			\put(13,7){$p_1$}
			\put(52,30){$p_2$}
			\put(90.5,52){$p_3$}
			\put(51,-1){$\Omega$}
			\put(93,10){$\Omega$}
		\end{overpic}
	\end{center}
	\caption{Illustration of the transitivity of transversal homoclinic intersections.}\label{Fig11}
\end{figure} 
Therefore, we have the following corollary.

\begin{corollary}%\label{C1}
	Let $F$ satisfy hypothesis~\ref{H} and $p_1$, $p_2$, $p_3\in\mathcal{U}\setminus\Omega$ be hyperbolic fixed points of $F$. If $p_1\sim p_2$ and $p_2\sim p_3$, then $p_1\sim p_3$.
\end{corollary}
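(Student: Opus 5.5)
The plan is to apply Theorem~\ref{M1} to a small piece of $W^u(p_1)$ near a transversal intersection point with $W^s(p_2)$. This places $u_2$-disks of $W^u(p_1)$ $C^1$-close to a disk of $W^u(p_2)$ that meets $W^s(p_3)$ transversely. Since transversality is an open condition in the $C^1$-topology, the intersection then persists.

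First, let $q_{12}\in\big(W^u(p_1)\cap W^s(p_2)\big)\setminus\Omega$ be a transversal intersection point, which exists since $p_1\sim p_2$. Likewise, let $q_{23}\in\big(W^u(p_2)\cap W^s(p_3)\big)\setminus\Omega$ be a transversal intersection point. Near $q_{12}$, the manifold $W^u(p_1)$ is $C^1$. Indeed, $q_{12}=F^m(r)$ for some $r\in W^u_{loc}(p_1)$, and $F^m$ is a $C^1$-diffeomorphism near $r$ by hypothesis~\ref{H}(ii), since both $r$ and $q_{12}$ lie off $\Omega$ (we shrink the local unstable manifold to lie in $\mathcal{U}\setminus\Omega$). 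So we may choose a $u_1$-disk $\Delta\subset W^u(p_1)$ containing $q_{12}$, disjoint from $\Omega$, and intersecting $W^s(p_2)$ transversely at $q_{12}$.

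Second, by the same argument, $W^u(p_2)$ and $W^s(p_3)$ are $C^1$ near $q_{23}$. This uses hypothesis~\ref{H}(ii) for $F$ along $W^u(p_2)$ and hypothesis~\ref{H}(iii) for $F^{-1}$ along $W^s(p_3)$. Choose a $u_2$-disk $D\subset W^u(p_2)$ containing $q_{23}$ in its interior, small enough that $D\cap\Omega=\emptyset$. Theorem~\ref{M1} applied to $p_2$, $\Delta$ and $D$ then gives $i\geqslant0$ and a $u_2$-disk $D'\subset F^i(\Delta)\subset W^u(p_1)$ that is arbitrarily $C^1$-close to $D$. Here the $C^1$ (not merely $C^0$) version applies precisely because $D\cap\Omega=\emptyset$, which is why we insist on $q_{23}\notin\Omega$.

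Third, take a small $s_3$-disk $E\subset W^s(p_3)$ through $q_{23}$, $C^1$ and disjoint from $\Omega$. Since $D$ and $E$ meet transversely at $q_{23}$ (so $u_2+s_3\geqslant n$), the standard openness of transversality shows that any $u_2$-disk sufficiently $C^1$-close to $D$ meets $E$ transversely at a point near $q_{23}$. One way to see this is via the implicit function theorem in a chart straightening $E$. Taking $D'$ close enough, we obtain a transversal intersection point of $W^u(p_1)$ and $W^s(p_3)$ near $q_{23}$. Because $\Omega$ is closed near $q_{23}$ and $q_{23}\notin\Omega$, this point can be taken outside $\Omega$. Hence $p_1\sim p_3$.

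The main obstacle is ensuring that every object involved is genuinely $C^1$, so that both the $C^1$ conclusion of Theorem~\ref{M1} and the openness of transversality are available. The plan handles this entirely by choosing all disks away from $\Omega$, using that the intersection points are off $\Omega$ by the definition of $\sim$. A minor subtlety is whether Theorem~\ref{M1} needs the iterate $F^i(\Delta)$ itself to avoid $\Omega$. It does not, since only the approximating disk $D'$ near $D$ matters, and $D'$ lies in a neighbourhood of $D$ disjoint from $\Omega$.
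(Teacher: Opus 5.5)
Your proposal is correct and follows the paper's own route. The paper deduces the corollary ``directly from the $C^1$-approximation provided by Theorem~\ref{M1}'' (as in Palis--de Melo), which is exactly your use of the $C^1$ case of the $\lambda$-Lemma with a target disk $D\subset W^u(p_2)$ through $q_{23}$ chosen disjoint from $\Omega$, followed by openness of transversality; you simply supply details the paper leaves implicit, namely the $C^1$ regularity of the invariant manifolds near $q_{12}$ and $q_{23}$ via hypothesis~\ref{H}, and keeping the new intersection point off $\Omega$ (which, like the paper, tacitly uses that $\Omega$ is closed).
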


It particular, observe that if $p_1\sim p_2$, $p_2\sim p_3$ and $p_3\sim p_1$, then $p_i\sim p_i$ for every $i\in\{1,2,3\}$. That is, the existence of a polycycle composed by $p_1$, $p_2$ and $p_3$, implies that each $p_i$ has a transverse homoclinic intersection.

\subsection{Piecewise Birkhoff-Smale Theorem}

Let $S_j$ (resp. $S_\infty$) be the space of bi-infinite sequences $s=\{s_k\}_{k\in\mathbb{Z}}$ with $s_k\in I_j$, where $I_j=\{1,\dots,j\}$, $j\geqslant2$ (resp. $I_\infty=\mathbb{N}$), endowed with the product topology. The \emph{Bernoulli Shift} is the homeomorphism $\sigma\colon S_j\to S_j$ (resp. $\sigma\colon S_\infty\to S_\infty)$ given by $\sigma(s)_k=s_{k+1}$. That is,
\begin{equation}\label{02}
	\sigma(\dots,s_{-1},s_0;s_1,\dots)=(\dots,s_0,s_1;s_2,\dots).
\end{equation}
We say that a set $\Lambda\subset\mathcal{U}$ is \emph{invariant} by $F$ if $F(\Lambda)=\Lambda$. As previously mentioned, a compact invariant set is \emph{hyperbolic} if every point has local stable and unstable manifolds which are carried with the base-point when iterating the map. Since we do not need to work directly with it in this paper, we omit its rigorous definition. We refer the interested reader to~\cite[Def.~$5.2.6$ and Theo.~$5.2.8$]{GukHol1983}. Given a hyperbolic fixed point $p\in\mathcal{U}\setminus\Omega$, we say that $q\in\mathcal{U}\setminus\Omega$, $q\neq p$, is a \emph{transverse homoclinic point} if $W^s(p)$, $W^u(p)$ intersects transversely at $q$. 

\begin{main}[Piecewise Birkhoff-Smale Theorem]\label{M2}
	Let $F$ satisfy hypothesis~\ref{H}, $p\in\mathcal{U}\setminus\Omega$ be a hyperbolic fixed point of $F$, and $q\in\mathcal{U}\setminus\Omega$ a transverse homoclinic point. Then for each $j\in\mathbb{N}_{\geqslant2}$, there is $k\in\mathbb{N}$ and a homeomorphism $\tau_j\colon S_j\to\tau(S_j)\subset\mathcal{U}\setminus\Omega$, such that $\tau_j(S_j)$ is compact, invariant by $F^k$, hyperbolic, and $F^k\circ\tau_j=\tau_j\circ\sigma$.
\end{main}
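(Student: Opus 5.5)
The plan is to reduce Theorem~\ref{M2} to the classical smooth Birkhoff--Smale construction by localizing everything away from $\Omega$, in the same way as in the proof of Theorem~\ref{M1}. First choose a small neighborhood $U\subset\mathcal{U}\setminus\Omega$ of $p$ on which $F$ is $C^1$ and in which we have coordinates from the Stable Manifold Theorem, so that $W^s_{loc}(p)$ and $W^u_{loc}(p)$ are flattened into coordinate planes. Since $q\in W^s(p)\cap W^u(p)$, there are $m_1,m_2\in\mathbb{N}$ with $F^{m_1}(q)\in W^s_{loc}(p)\subset U$ and $F^{-m_2}(q)\in W^u_{loc}(p)\subset U$. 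Moreover, $q$, $F^{m_1}(q)$ and $F^{-m_2}(q)$ all lie outside $\Omega$. Hence, by property $(ii)$ of hypothesis~\ref{H} and its version for $F^{-1}$, the map $F^{m_1+m_2}$ is a $C^1$-diffeomorphism from a neighborhood $B$ of $F^{-m_2}(q)$ onto a neighborhood of $F^{m_1}(q)$, and both neighborhoods can be taken inside $U$. Transversality at $q$ is preserved under these local diffeomorphisms. We thus obtain a transverse homoclinic point $q'=F^{-m_2}(q)\in U$, whose connecting orbit segment, although it may cross $\Omega$, is realized by a single $C^1$ map on $B$.

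Next I would build the horseshoe in the standard way (as in Smale's proof, or \cite[Theorem~$5.3.5$]{GukHol1983}). Take a thin box $R\subset U$ around $W^s_{loc}(p)$ that contains $p$ and $F^{m_1}(q)$. For large $N$, the map $G=F^{N}$ acts on $R$ through two kinds of pieces: one uses only iterates inside $U$ near $p$, and the other passes through the homoclinic excursion $F^{m_1+m_2}|_B$ together with local iterates. By Theorem~\ref{M1}, applied to a disk transverse to $W^s(p)$ and its local version in $U$, the image $G(R)$ crosses $R$ in $C^1$-thin strips close to $W^u_{loc}(p)$. To get $j$ symbols, take $k$ large enough that $F^k(R)\cap R$ contains at least $j$ disjoint full-height strips; this is possible because orbits can loop near $p$ for different times before and after the excursion. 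Select $j$ of these strips, $V_1,\dots,V_j$, with preimages $H_1,\dots,H_j$. On each $H_i$ the map $F^k$ is a composition of finitely many $C^1$-diffeomorphisms between points of $U$, with intermediate points possibly in $\Omega$, so by~\ref{H}$(ii)$ it is a $C^1$-diffeomorphism of $H_i$. This is the step I expect to be the main obstacle. One must check that each $H_i$ can be shrunk so that the whole strip, and not only the orbit of one point, avoids the non-smooth behaviour at the start and end times, and that the required neighborhoods are uniform along the strip. I would handle it by choosing $R$ inside the neighborhood $B$ provided by~\ref{H}, using compactness, and noting that only the endpoints need to be off $\Omega$.

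With $F^k|_{H_i}$ a $C^1$-diffeomorphism for each $i$, the Conley--Moser cone conditions follow from hyperbolicity at $p$ and the $C^1$-closeness given by the $\lambda$-Lemma. Horizontal strips are mapped to vertical strips, stable cones are contracted and unstable cones expanded. The invariant set $\Lambda=\bigcap_{l\in\mathbb{Z}}F^{-kl}\bigl(\bigcup_i H_i\bigr)$ is then compact and contained in $R\subset\mathcal{U}\setminus\Omega$. The itinerary map gives a homeomorphism $\tau_j^{-1}\colon\Lambda\to S_j$ conjugating $F^k$ with $\sigma$, because nested strips shrink to points. Hyperbolicity of $\Lambda$ comes from the invariant cone fields, since $F^k$ is $C^1$ on a neighborhood of $\Lambda$.
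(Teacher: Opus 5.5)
Your proposal is correct and is essentially the paper's proof. You localize in a neighborhood $U\subset\mathcal{U}\setminus\Omega$ of $p$, take a thin box around $W^s(p)\cap U$ (your $R$, the paper's $N$), let $V_1,\dots,V_j$ be components of $F^k(R)\cap R$ near $p$ and near iterates of the homoclinic point, use hypothesis~\ref{H}$(ii)$ to see that $F^k|_{H_i}$ is a $C^1$-diffeomorphism because $H_i,V_i\subset U$, and then run the classical argument, which the paper simply cites from Newhouse.

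The step you flag as the main obstacle is actually immediate, and this is exactly the paper's justification: \ref{H}$(ii)$ is pointwise and applies at every $x\in H_i$ because $x$ and $F^k(x)$ lie in $U$, and injectivity of $F^k$ turns these local diffeomorphisms into one on all of $H_i$. So no uniformity or compactness is needed, and $R$ could not be placed inside $B$ anyway, since $B$ is a neighborhood of $F^{-m_2}(q)$, not of $W^s_{loc}(p)$.
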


\begin{proof}[Proof of Theorem~\ref{M2}]
Similarly to the $\lambda$-Lemma, the bulk of the proof is local. First we iterate the homoclinic point $q$ until entering a neighborhood $U\subset\mathcal{U}\setminus\Omega$ of $p$ in which coordinates of the Stable Manifold Theorem can be assumed. Then, we consider a small neighborhood $N\subset U$ of $W^s(p)\cap U$ containing $p$ and $q$ in its interior, and let $k\in\mathbb{N}$ be big enough such that $F^k(N)$ resembles the horseshoe map, see Figure~\ref{Fig15}$(a)$.
\begin{figure}[ht]
	\begin{center}
		\begin{minipage}{6.25cm}
			\begin{center}
				\begin{overpic}[width=5.75cm]{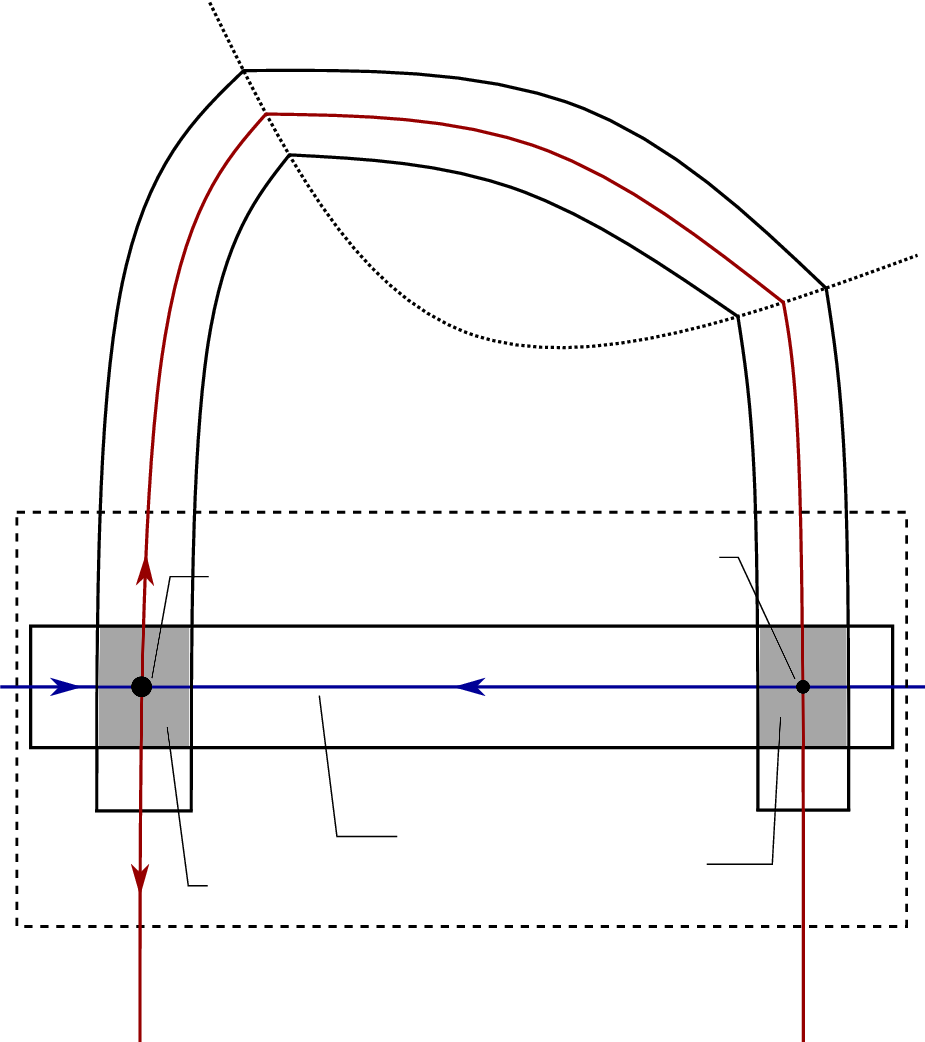} 
				%\begin{overpic}[width=5.75cm,grid,tics=5]{Fig20.eps}
					\put(20.5,43.75){$p$}
					\put(66,45.5){$q$}
					\put(39,18){$W^s(p)$}
					\put(14.5,0){$W^u(p)$}
					\put(85,76){$\Omega$}
					\put(20.5,13.5){$V_1$}
					\put(62,15){$V_2$}
					\put(1,52){$U$}
					\put(43.5,41){$N$}
					\put(61,88){$F^k(N)$}
				\end{overpic}
					
				$(a)$
			\end{center}
		\end{minipage}
		\begin{minipage}{6.25cm}
			\begin{center}
				\begin{overpic}[width=5.75cm]{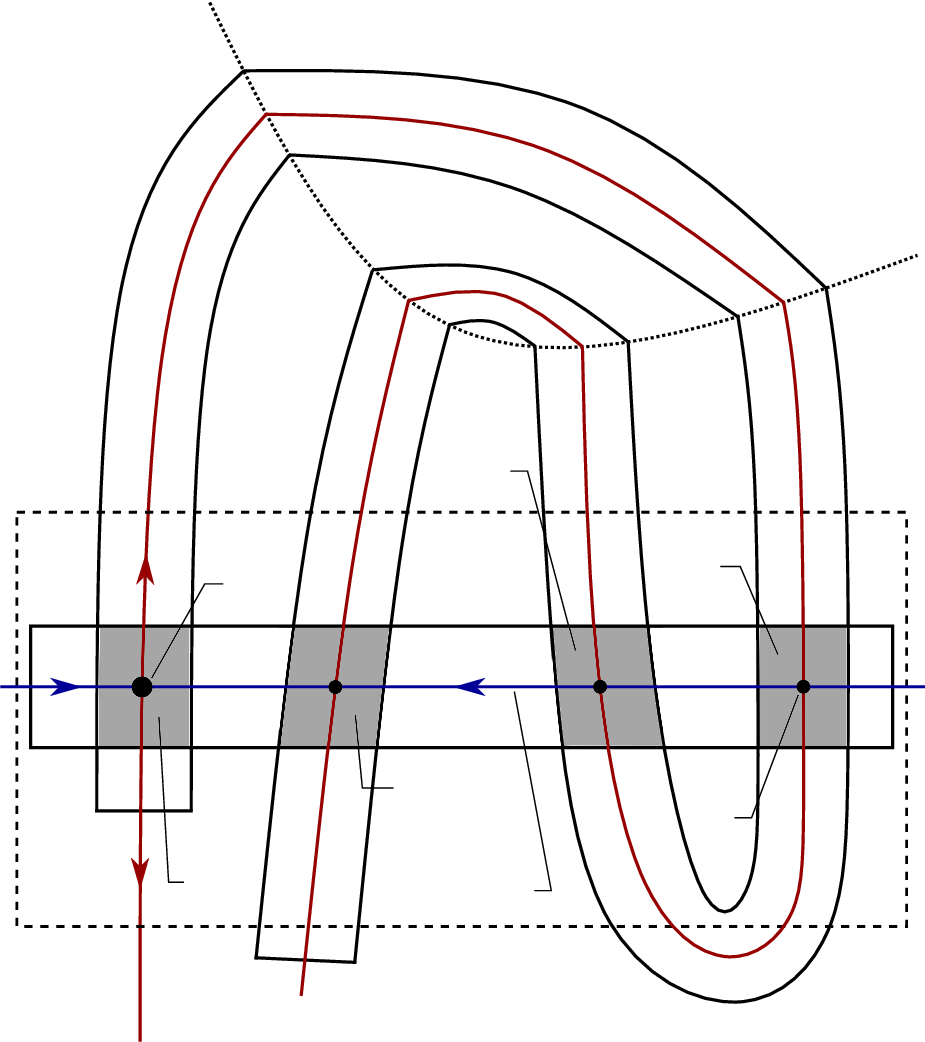} 
				%\begin{overpic}[width=5.75cm,grid,tics=5]{Fig21.eps}
					\put(22,43){$p$}
					\put(67.5,20.5){$q$}
					\put(36,13){$W^s(p)$}
					\put(-2.5,0){$W^u(p)$}
					\put(85,76){$\Omega$}
					\put(18,13.75){$V_1$}
					\put(63.25,43.75){$V_2$}
					\put(1,52){$U$}
					\put(43,52.75){$V_3$}
					\put(38,22.5){$V_4$}
					\put(43.5,41){$N$}
					\put(61,88){$F^k(N)$}
				\end{overpic}
					
				$(b)$ 
			\end{center}
		\end{minipage}
	\end{center}
\caption{Illustration of Theorem~\ref{M2} with $(a)$ $j=2$ and $(b)$ $j=4$.}\label{Fig15}
\end{figure} 
We then let $V_i$ be the connected components of $N\cap F^k(N)$ containing $p$, $q$, and other iterates of $q$, if $j>2$. The map $\tau_j\colon S_j\to\Lambda_j$ takes $s=(s_m)_{m\in\mathbb{Z}}$ and sends it to a point $r$ such that $F^{km}(r)\in V_{s_m}$ for every $m\in\mathbb{Z}$. Let $H_i=F^{-k}(V_i)$. Since $H_i$, $V_i\subset U$ and $U\subset\mathcal{U}\setminus\Omega$, it follows from hypothesis~\ref{H} that $F^k|_{H_i}$ is a $C^1$-diffeomorphism, $i\in\{1,\dots,j\}$. Hence, one can apply the usual proof of the theorem, see for instance~\cite[Theorem~$2.3$]{Newhouse}.	
\end{proof}

\subsection{Piecewise version of Moser's Theorem}\label{Sec3.3}

We now work on the case of countably infinite symbols. It follows from Tychonoff Theorem~\cite[Theorem~$37.3$]{Munkres} that the space of sequence of $j$-symbols $S_j$ is compact, for each $j\in\mathbb{N}$. However, this is not the case for $S_\infty$. Hence, we introduce a compactification $\overline{S_\infty}$ of $S_\infty$ by including elements of the following type, see~\cite[p.~$74$]{Moser}. Given $\kappa$, $\lambda\in\mathbb{Z}$ such that $\kappa\leqslant0$ and $\lambda\geqslant1$, let $s=(\infty,s_{\kappa+1},\dots,s_{\lambda-1},\infty)$, $s_i\in\mathbb{N}$, where $\kappa=0$ and $\lambda=1$ represents the symbolic element $(\infty;\infty)$. If we take $\kappa=\infty$ and $\lambda=\infty$, then we identify the above sequences with the elements of $S_\infty$. We also admit half-infinite sequences with $\kappa=-\infty$ and $\lambda<\infty$, or $\kappa>-\infty$ and $\lambda=\infty$. These sequences form the space $\overline{S_\infty}$. For each $k\in\mathbb{N}$, we define the $k$-neighborhood of the element $s^*=(\dots,s^*_{-1},s^*_0;s_1,\dots,s^*_{\lambda-1},\infty)$ as the set of elements $s$ for which 
\[
	s_i=s_i^*, \textnormal{ for } i\in\{-k,\dots,\lambda-1\}, \quad \textnormal{and} \quad s_\lambda\geqslant k.
\]
Defining such neighborhoods for the other types of new elements, we obtain a topology for $\overline{S_\infty}$ extending that of $S_\infty$, and in such a way that $\overline{S_\infty}$ is compact.

The shift map $\sigma$ will not be defined for the entire $\overline{S_\infty}$. Rather, it will be extended to a map $\overline{\sigma}\colon D_\sigma\to R_\sigma$, defined also by~\eqref{02}, i.e. by the shift of the elements of the sequence, but only for the sequences in $D_\sigma$, $R_\sigma\subset\overline{S_\infty}$, given by
\begin{equation}\label{03}
	D_\sigma=\{s\in\overline{S_\infty}\colon s_1\neq\infty\}, \quad R_\sigma=\{s\in\overline{S_\infty}\colon s_0\neq\infty\}.
\end{equation}
It is not hard to see that $\overline{\sigma}$ is a homeomorphism.

One need now to define a map that somewhat plays the role of infinitely many iterates of $F$, i.e. $F^\infty$. To this end, Moser~\cite[p.~$101$]{Moser} defines the \emph{transversal map}, for dimension two, as follows (the case of higher dimensions is postponed to Appendix~\ref{AppA}). Given a homoclinic point $q$, let $R$ be a small enough quadrilateral having $q$ as a corner point, and $W^s(p)$, $W^u(p)$ as two of its sides. Given $r\in R$, let $k=k(r)$ be the smallest positive integer such that $F^k(r)\in R$, if it exists. Let $D_F\subset R$ be the set of points $r$ for which such a $k(r)$ exists. As we shall see, $D_F$ will not be empty. The transversal map $F^\infty\colon D_F\to R$ is defined as $F^\infty(r)=F^{k(r)}(r)$.

\begin{main}[Moser's Theorem]\label{M3}
	Let $F$ satisfy hypothesis~\ref{H}, $p\in\mathcal{U}\setminus\Omega$ be a hyperbolic fixed point of $F$, and $q\in\mathcal{U}\setminus\Omega$ a transverse homoclinic point. Then for every small enough neighborhood $V\subset\mathcal{U}\setminus\Omega$ of $q$ there is $R\subset V$ and a non-empty set $D_F\subset R$ such that $F^\infty\colon D_F\to R$ is well defined. Moreover, there is a homeomorhpism $\tau_\infty\colon S_\infty\to\tau_\infty(S_\infty)\subset D_F$ such that $\tau_\infty(S_\infty)$ is invariant by $F^\infty$, and $F^\infty\circ\tau_\infty=\tau_\infty\circ\sigma$. Furthermore, $\tau_\infty$ can be extended to a homeomorphism $\overline{\tau_\infty}\colon\overline{S_\infty}\to\overline{\tau_\infty(S_\infty)}$ such that $F^\infty\circ\overline{\tau_\infty}=\overline{\tau_\infty}\circ\overline{\sigma}$, if both sides are restricted to $D_\sigma$.
\end{main}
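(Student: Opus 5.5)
We follow the same strategy as in the proofs of Theorems~\ref{M1} and~\ref{M2}: reduce to a purely local problem where $F$ is smooth, and then invoke Moser's classical argument~\cite[Theorems~$3.7$ and~$3.8$]{Moser}. The only new difficulty is that $F^\infty$ involves arbitrarily many iterates $k(r)$. We must therefore check that along the relevant orbit segments $F^{k(r)}$ is still a $C^1$-diffeomorphism near $r$.

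First, we fix a neighborhood $U\subset\mathcal{U}\setminus\Omega$ of $p$ on which the coordinates of the Stable Manifold Theorem hold, so that $W^s_{loc}(p)$ and $W^u_{loc}(p)$ are the coordinate axes. The orbit of $q$ is $\{F^m(q)\}_{m\in\mathbb{Z}}$, and it accumulates only at $p$. Hence there are integers $m_-<0<m_+$ such that $F^m(q)\in U$ for $m\leqslant m_-$ and for $m\geqslant m_+$. The finitely many points $F^{m}(q)$ with $m_-<m<m_+$ form a compact set. If one of them lies in $\Omega$, we perturb the choice of base point: replace $q$ by the homoclinic point $F^{m_+}(q)\in U$ and shrink $U$. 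The ``excursion'' of the orbit of $q$ outside $U$ is then a finite segment from $F^{m_-}(q)$ to $F^{m_+}(q)$. Since both endpoints lie in $\mathcal{U}\setminus\Omega$, hypothesis~\ref{H}(ii) gives that $G:=F^{m_+-m_-}$ is a $C^1$-diffeomorphism on a neighborhood $B$ of $F^{m_-}(q)$. Hypothesis~\ref{H}(ii) requires only that the endpoints avoid $\Omega$, not the intermediate iterates, so this step is robust.

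Next, we take $V$ small and choose a quadrilateral $R\subset V$ with corner $q'=F^{m_-}(q)$, sides on $W^s_{loc}(p)$ and $W^u_{loc}(p)$, and $R\subset B\cap U$. Here we are using the freedom in choosing the representative of the homoclinic orbit. For $r\in R$, the return $F^{k(r)}(r)\in R$ decomposes into two parts. The first is a passage of $\ell$ iterates inside $U$, close to $p$, where $F$ is $C^1$ (indeed close to its linearization). The second is the single excursion $G$. Concretely, $F^{k(r)}=G\circ F^{\ell}$ on the horizontal strip $H_\ell\subset R$ of points that stay in $U$ for exactly $\ell$ steps before being mapped into $G^{-1}(R)$. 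Every factor is $C^1$ on the relevant set, so $F^\infty$ is a $C^1$-diffeomorphism from each strip $H_\ell$ onto a vertical strip $V_\ell$, for all $\ell\geqslant \ell_0$. In particular $D_F\supset\bigcup_\ell H_\ell\neq\emptyset$.

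The main obstacle is this strip structure. We must show that the strips have Moser's cone-field property uniformly in $\ell$: horizontal and vertical strips map respectively to strips of the same type, with contraction and expansion rates bounded uniformly. Near $p$, this follows from hyperbolicity of $DF$ on $U$. For the excursion, it follows from the fact that $G$ is a fixed $C^1$-diffeomorphism near $q'$ that carries $W^u$ transversely across $W^s$. This is the content of the transversality at $q$ and the $C^1$ $\lambda$-Lemma (Theorem~\ref{M1}), applied in the case $D\cap\Omega=\emptyset$. Once we check that $\{H_\ell\}$ and $\{V_\ell\}$ satisfy Moser's conditions (A1)--(A3), his construction yields $\tau_\infty$, conjugating $F^\infty$ with $\sigma$ on $S_\infty$. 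The points whose forward (respectively backward) iterates stop returning correspond to the symbol $\infty$; these are the points on $W^s_{loc}(p)\cap R$ and $W^u_{loc}(p)\cap R$ and their preimages. Moser's compactification argument then extends $\tau_\infty$ to $\overline{\tau_\infty}$ with $F^\infty\circ\overline{\tau_\infty}=\overline{\tau_\infty}\circ\overline{\sigma}$ on $D_\sigma$. This extension is purely topological and uses only the continuity of $F$, which we have from hypothesis~\ref{H}.
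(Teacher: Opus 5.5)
Your strategy is the paper's. The only non-local part of a return is the finite excursion of the homoclinic orbit. Hypothesis (H)(ii) makes that excursion a $C^1$-diffeomorphism because only its endpoints must avoid $\Omega$. The passage near $p$ takes place in $U\subset\mathcal{U}\setminus\Omega$, where $F$ is $C^1$, so Moser's argument applies verbatim. The paper keeps the corner of $R$ at $q$ and splits the excursion there, writing $F^\infty=F^m\circ\Psi\circ F^\ell$ with $F^\ell(R),F^{-m}(R)\subset U$. You instead glue the two halves into the single map $G=F^{m_+-m_-}$ and move the corner to $q'=F^{m_-}(q)$.

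That relocation means that what you wrote does not prove the statement as posed. The theorem fixes $q$ and asks for $R\subset V$ for every small neighborhood $V$ of $q$. A quadrilateral with corner $q'\neq q$ cannot lie in such a $V$, so ``$R\subset V$ with corner $q'$'' is contradictory. The statement also gives you no freedom to change the representative of the homoclinic orbit; the same objection applies to your suggestion of replacing $q$ by $F^{m_+}(q)$, which you yourself note is unnecessary.

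There are two further slips in the local picture. First, $q'\in W^u_{loc}(p)\setminus\{p\}$, so $q'$ is not on $W^s_{loc}(p)$, and the stable side of $R$ lies on a non-local piece of $W^s(p)$. Second, $R\subset B$ and $G$ is the map defined on $B$, so the return is $F^{k(r)}=F^{\ell}\circ G$ (excursion first, then passage near $p$), not $G\circ F^{\ell}$. Accordingly, $H_\ell$ should be described by the condition $F^{\ell}(G(r))\in R$, not via $G^{-1}(R)$.

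The repair is short. The map $h:=F^{-m_-}$ is a $C^1$-diffeomorphism near $q'$ by (H)(ii), since $q',q\notin\Omega$. It maps $W^s(p)$ and $W^u(p)$ into themselves and sends $q'$ to $q$, so $h(R)$ is a quadrilateral with corner $q$, contained in $V$ once $R$ is small. Since $h$ commutes with $F$ and is injective, $F^k(r)\in R$ if and only if $F^k(h(r))\in h(R)$. Hence the first-return map of $h(R)$ is $h\circ F^\infty\circ h^{-1}$, and $h\circ\tau_\infty$, $h\circ\overline{\tau_\infty}$ give the required conjugacies. Equivalently, place $R$ at $q$ from the start and write the return as $F^{-m_-}\circ(\text{passage})\circ F^{m_+}$, which is exactly the paper's decomposition.
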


\begin{proof}[Proof of Theorem~\ref{M3}]
Roughly speaking, the proof of Theorem~\ref{M3} for dimension two work as follows, see Figure~\ref{Fig16}. 
\begin{figure}[ht]
	\begin{center}
		\begin{overpic}[height=10cm]{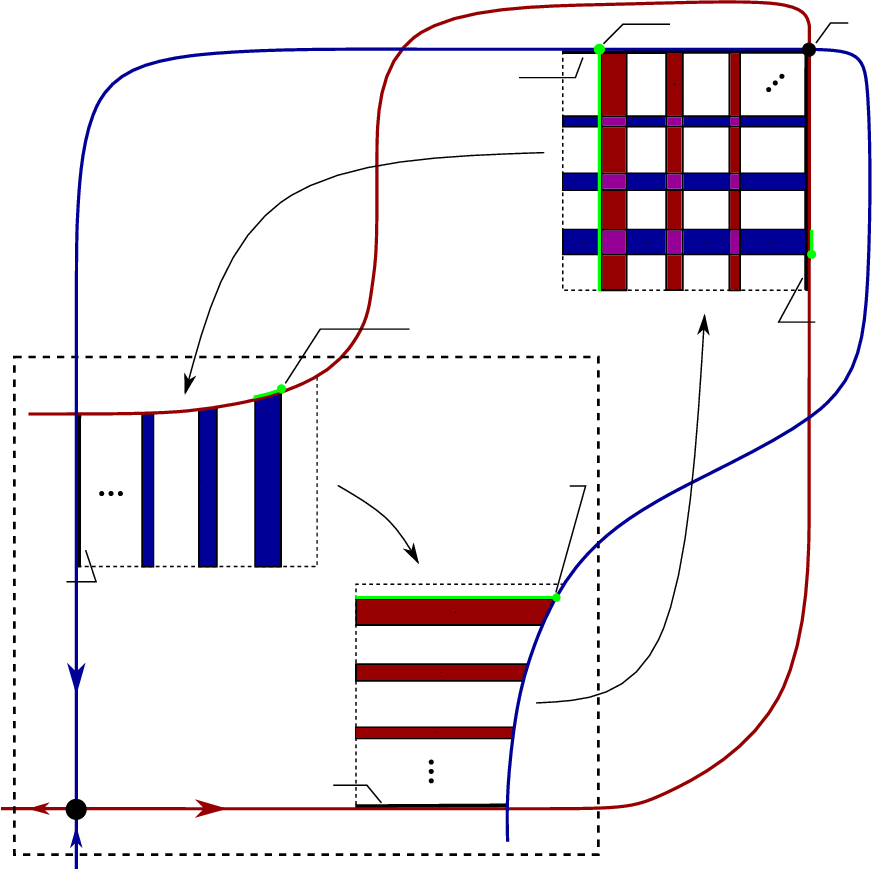} 
			%\begin{overpic}[height=10cm,grid,tics=5]{Fig22.eps} 
			\put(29.5,31){$\widetilde{V}_1$}
			\put(23,31){$\widetilde{V}_2$}
			\put(16,31){$\widetilde{V}_3$}
			\put(2.5,31.75){$\widetilde{V}_\infty$}
			
			\put(36.5,28){$\widetilde{U}_1$}
			\put(36.5,21.5){$\widetilde{U}_2$}
			\put(36.5,14.5){$\widetilde{U}_3$}
			\put(32.5,8.5){$\widetilde{U}_\infty$}
			
			\put(60.5,70.5){$V_1$}
			\put(60.5,77.5){$V_2$}
			\put(60.5,84.5){$V_3$}
			\put(55,89.5){$V_\infty$}
			
			\put(69,63){$U_1$}
			\put(76,63){$U_2$}
			\put(83,63){$U_3$}
			\put(93.5,61.5){$U_\infty$}
			
			\put(6,4){$p$}
			\put(98,96.25){$q$}
			\put(94.25,69.5){$r_0$}
			\put(47.25,60.5){$F^\ell(r_0)$}
			\put(49,42.75){$\Psi\circ F^\ell(r_0)$}
			\put(77,95.5){$F^\infty(r_0)$}
			
			\put(27,75){$F^\ell$}
			\put(41,37.5){$\Psi$}
			\put(78,30){$F^m$}
			
			\put(1,59.5){$U$}
			\put(13.5,87){$W^s(p)$}
			\put(89,15){$W^u(p)$}		
			\put(61,64.5){$R$}	
		\end{overpic}
	\end{center}
	\caption{Illustration of the proof of Theorem~\ref{M3}.}\label{Fig16}
\end{figure}
First we take a small enough neighborhood $U\subset\mathcal{U}\setminus\Omega$ of $p$ in which coordinates $(x_u,x_s)$ of the Stable Manifold Theorem can be assumed. Then we take $\ell$, $m\in\mathbb{N}$ big enough such that $F^\ell(R)$, $F^{-m}(R)\subset U$. Given any set $W\subset U$, it is not hard to see that $F|_U$ contracts $W$ in the direction of $W^s(p)$, while expanding it in the direction of $W^u(p)$. For definiteness, let us suppose $W^s(p)\cap U$ is the vertical axis $x_u=0$, while $W^u(p)\cap U$ is the horizontal one $x_s=0$. Suppose also that $F^\ell(R)$ and $F^{-m}(R)$ are contained in the first quadrant $x_s>0$, $x_u>0$. Let $\{\widetilde{V}_i\}_{i\in\mathbb{N}}\subset F^\ell(R)$ be a sequence of ``vertical'' strips, parallel to $W^s(p)$, and such that $\widetilde{V}_{i+1}$ is on the left-hand side of $\widetilde{V}_i$. For each $i\in\mathbb{N}$, there is $k=k(i)$ big enough, $k(i+1)>k(i)$, such that $F^{k(i)}(\widetilde{V}_i)$ intersects $F^{-m}(R)$ in ``horizontal'' strips $\widetilde{U}_i$, parallel to $W^u(p)$. Let $\Psi\colon\cup_{i\in\mathbb{N}}\widetilde{V}_i\to\cup_{i\in\mathbb{N}}\widetilde{U}_i$ be the map given by $\Psi(\widetilde{V}_i)=F^{k(i)}(\widetilde{V}_i)$. Let $V_i=F^{-\ell}(\widetilde{V}_i)$, $U_i=F^m(\widetilde{U}_i)\subset R$, and notice that $F^\infty:=F^m\circ\Psi\circ F^\ell$ is well defined on the non-empty set $D_F:=F^{-\ell}(\cup_{i\in\mathbb{N}}\widetilde{V}_i)$. The map $\tau_\infty\colon S_\infty\to\tau_\infty(S_\infty)\subset D_F$ takes a sequence $s=(s_m)_{m\in\mathbb{Z}}$ and sends it to a point $r$ such that $(F^\infty)^m(r)\in U_{s_m}$ for every $m\in\mathbb{Z}$.

The extension to $\overline{S_\infty}$ work as follows. First, since $\widetilde{V}_i\to W^s(p)$ and $\widetilde{U}_i\to W^u(p)$ uniformly, we let $\widetilde{V}_\infty\subset W^s(p)$ and $\widetilde{U}_\infty\subset W^u(p)$ be such limits, and consider $V_\infty=F^{-\ell}(\widetilde{V}_\infty)$, $U_\infty=F^m(\widetilde{U}_\infty)$. The sequence $(\infty;s_1,s_2,\dots)$ represents a point $r\in U_\infty$ such that $(F^\infty)^m(r)\in U_{s_m}$. Since $U_\infty\subset W^u(p)$, it follows that we can iterate the point $r$ forward, but not backward, as the latter will converge to $p$ and thus escape. Hence, recall~\eqref{03}, we let $\overline{\sigma}^{-1}\colon R_\sigma\to D_\sigma$ undefined for $s_0=\infty$. Similarly, if we interchange $U_i$ by $V_i$ and $F^\infty$ by its inverse, the sequence $(\dots,s_{-1},s_0;\infty)$ denotes a point lying in $V_{\infty}$ that can be iterated backward, but not forward. In Figure~\ref{Fig16} for instance, the green segment contained in $U_\infty$ represents the set of sequences $(\infty;1,s_2,s_3,\dots)$, while the green point $r_0$ is represented by $(\infty;1,\infty)$. The homoclinic point $q$ is represented by $(\infty;\infty)$. 

Rigorous speaking, since hypothesis~\ref{H} ensures that $F^\ell|_R$, $F^m_R$ and $\Psi$ are $C^1$ diffeomorphisms, the prove of Theorem~\ref{M3} follows by applying the original proof of Moser~\cite[p.~$181$]{Moser}. The proof for higher dimensions follows similarly. For simplicity, we postpone it to Appendix~\ref{AppA}.
\end{proof}

We remark that while $\Lambda_\infty:=\tau_\infty(S_\infty)$ is invariant but not compact, $\overline{\Lambda_\infty}=\overline{\tau_\infty}(\overline{S_\infty})$ is compact but not invariant. Nevertheless, knowing that any sequence of the form $(\infty,\dots,s_0;s_1,\dots\infty)$ is a homoclinic point, it follows that the set of homoclinic points of $p$ is dense in $\overline{\Lambda_\infty}$, see~\cite[Corollary~$3.8$]{Moser}.

\subsection{Chaos}

We now use Devaney's notion of chaos~\cite[Chapter $8$]{Devaney} to formulate the chaotic dynamics provided by the existence of transversal homoclinic points.

\begin{definition}
	Let $(\Lambda,d)$ be a metric space and $f\colon\Lambda\to\Lambda$ a continuous map. We say that $f$ is \emph{chaotic in the sense of Devaney} if the following statements holds.
	\begin{enumerate}[label=(\alph*)]
		\item The set of periodic points of $f$ is dense in $\Lambda$;
		\item $f$ is transitive in $\Lambda$, i.e. given any two non-empty open sets $U_1$, $U_2\subset \Lambda$, there is $n>0$ such that $f^n(U_1)\cap U_2\neq\emptyset$.
	\end{enumerate}
\end{definition}

In his original definition of chaos, Devaney also included the following condition in addition with $(a)$ and $(b)$:
\begin{enumerate}[label=(\alph*)]
	\addtocounter{enumi}{2}
	\item $f$ has sensitive dependence in $\Lambda$, i.e. there is $\beta>0$ such that for any $x\in\Lambda$ and any neighborhood $U\subset\Lambda$ of $x$, there are $r\in U$ and $n>0$ such that $d\big(f^n(x),f^n(r)\big)>\beta$.
\end{enumerate} 
However, Banks et al~\cite{Banks} proved that this third condition is redundant, i.e. $(a)$ and $(b)$ hold, then $(c)$ holds. Moreover, consider the following condition.
\begin{enumerate}
	\item[$(b')$] $f$ has a dense orbit in $\Lambda$.
\end{enumerate} 
We notice that if $(b')$ holds, then $(b)$ holds. In fact, they are equivalent~\cite[Lemma $4.3.4$]{ViaOli2016}. Under the hypotheses of Theorems~\ref{M2} and~\ref{M3}, it is clear from the conjugation of $F^k$ and $F^\infty$ with the Bernoulli Shift that statements $(a)$ and $(b')$ holds, and thus both maps are chaotic in the sense of Devaney. Hence, we have the following result.

\begin{main}\label{M4}
	Let $F$ satisfy hypothesis~\ref{H}, $p\in\mathcal{U}\setminus\Omega$ be a hyperbolic fixed point of $F$, $q\in\mathcal{U}\setminus\Omega$ a transverse homoclinic point, and $k\in\mathbb{N}$ as in Theorem~\ref{M2}. Then $F^k$ and $F^\infty$, restricted to their invariant sets, are chaotic in the sense of Devaney.
\end{main}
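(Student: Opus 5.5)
The plan is to transfer Devaney chaos from the symbolic model to $F^k$ and $F^\infty$ through the conjugacies of Theorems~\ref{M2} and~\ref{M3}. Both properties $(a)$ and $(b')$ are preserved by topological conjugacy. Indeed, if $h\colon X\to Y$ is a homeomorphism with $f\circ h=h\circ g$, then $h$ maps periodic points of $g$ onto periodic points of $f$ and maps orbits onto orbits. Since $h$ is a homeomorphism, it also maps dense sets onto dense sets. So it suffices to check $(a)$ and $(b')$ for $\sigma$ on $S_j$ and on $S_\infty$, and then invoke~\cite[Lemma~$4.3.4$]{ViaOli2016} to pass from $(b')$ to $(b)$.

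For $\Lambda_j=\tau_j(S_j)$, Theorem~\ref{M2} gives $F^k|_{\Lambda_j}=\tau_j\circ\sigma\circ\tau_j^{-1}$. Since $\Lambda_j$ is invariant, $F^k$ restricts to a continuous self-map of the metric space $\Lambda_j\subset\mathbb{R}^n$.

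\emph{Density of periodic points.} Take a basic cylinder $C=\{s\colon s_i=a_i,\ |i|\leqslant N\}$. The sequence obtained by periodically repeating the block $(a_{-N},\dots,a_N)$ is fixed by $\sigma^{2N+1}$ and lies in $C$.

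\emph{Dense orbit.} Enumerate all finite blocks over $I_j$. Concatenate them into the nonnegative half of a sequence $s^*$, and fill the negative half arbitrarily. For any cylinder $C$ as above, some block in the enumeration equals $(a_{-N},\dots,a_N)$. If it starts at position $m$, then $\sigma^{m+N}(s^*)\in C$. Hence the forward orbit of $s^*$ is dense.

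The same construction works verbatim in $S_\infty$, using the countable set of finite blocks over $\mathbb{N}$ and the cylinders of the product topology. Transporting by $\tau_\infty$ and using $F^\infty\circ\tau_\infty=\tau_\infty\circ\sigma$ on $\Lambda_\infty=\tau_\infty(S_\infty)$ gives $(a)$ and $(b')$ for $F^\infty|_{\Lambda_\infty}$.

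The only step needing care is making sure the restricted maps are genuine continuous self-maps of metric spaces, as the definition of Devaney chaos requires. For $F^k$ this is immediate from Theorem~\ref{M2}. For $F^\infty$, which is only defined on $D_F$ and has variable return time $k(r)$, I will not argue continuity directly. Instead I will deduce it from the conjugacy: $F^\infty|_{\Lambda_\infty}=\tau_\infty\circ\sigma\circ\tau_\infty^{-1}$ is a composition of homeomorphisms, hence continuous and a bijection of $\Lambda_\infty$.

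This also explains why the statement concerns $\Lambda_\infty$ rather than its closure: $\overline{\Lambda_\infty}$ is not invariant, since $F^\infty$ is undefined on points coded by $s_1=\infty$. Finally, sensitive dependence $(c)$ comes for free from Banks et al.~\cite{Banks}, so nothing further is needed to conclude the statement.
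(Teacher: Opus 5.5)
Your proposal is correct and follows the paper's argument. You transport $(a)$ and $(b')$ from the Bernoulli shift to $F^k|_{\tau_j(S_j)}$ and $F^\infty|_{\tau_\infty(S_\infty)}$ via the conjugacies of Theorems~\ref{M2} and~\ref{M3}, then pass to $(b)$ via \cite[Lemma~$4.3.4$]{ViaOli2016}. You simply make explicit what the paper calls ``clear'': the periodic repetition of a cylinder block, the concatenation of all finite blocks, and the continuity of $F^\infty$ on $\Lambda_\infty$ deduced from the conjugacy. The step $(b')\Rightarrow(b)$ is legitimate here because $S_j$ and $S_\infty$ have no isolated points.
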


\section{Applications of the results}\label{Sec4}

In this section we apply the results obtained in Section~\ref{Sec3} in piecewise smooth vector fields. In particular, we show that they can be lifted to the vector field itself.

Let $Z=(X_1,\dots,X_M,\Sigma)$ be a piecewise smooth vector field defined by~\eqref{01}, with each $C^1$-component $X_i=X_i(x)$ autonomous. For each $i\in\{1,\dots,M\}$ consider a non-autonomous perturbation $\mathcal{X}_j(t,x)=X_j(x)+\varepsilon Y_j(t,x)$, $\varepsilon>0$ small enough, $Y_j$ of class $C^1$ in $(t,x)$, and also $T$-periodic in $t$, $T>0$. Knowing that $\Sigma=\cup_{i=1}^N\Sigma_i$ with $\Sigma_i=h^{-1}(0)$, we let $\mathcal{H}_i(t,x):=h_i(x)$, $\Omega_i:=\mathcal{H}_i^{-1}(0)$, and $\Omega=\cup_{i=1}^N\Omega_i$. Finally, consider the piecewise smooth vector field $\mathcal{Z}=(\mathcal{X}_1,\dots,\mathcal{X}_n,\Omega)$. Since $\mathcal{Z}$ is non-autonomous, it is preferable to work in the extended phase space $\mathbb{S}^1\times\mathbb{R}^n$, with $\mathbb{S}^1:=\mathbb{R}/T\mathbb{Z}$, where $\mathcal{Z}$ is autonomous.

Suppose $\mathcal{Z}$ is free of sliding motion in an open set $\mathcal{U}\subset\mathbb{S}^1\times\mathbb{R}^n$, and let $\varphi(\tau,t,x)$ be its solution satisfying $\varphi(0,t,x)=(t,x)$. Since $\mathcal{Z}$ is $T$-periodic in $t$, for each $t\in\mathbb{S}^1$ we can consider its respective time-$T$-map $\Phi^T_t(x):=\varphi(T,t,x)$. Given $t_1$, $t_2\in\mathbb{S}^1$, it is not hard to see that $\Phi^T_{t_1}$ and $\Phi^T_{t_2}$ are topologically conjugated, with the conjugation given by $\Phi^{t_2-t_1}_{t_1}(x):=\varphi(t_2-t_1,t_1,x)$, with inverse $\Phi^{t_1-t_2}_{t_2}$. Moreover, it follows from Lemma~\ref{P2} that for every $(t_1,x)\not\in\Omega$ such that $\Phi^{t_2-t_1}_{t_1}(x)\not\in\Omega$, we have that $\Phi^{t_2-t_1}_{t_1}$ is a $C^1$-diffeomorphism in a neighborhood of such point. Hence, $\Phi^{t_2-t_1}_{t_1}$ is a $C^1$-differomophism except in finitely many sub-manifolds of codimension one. In particular, $\Phi^T_{t_1}$ and $\Phi^T_{t_2}$ are also $C^1$-conjugated excepted in finitely many sub-manifolds of codimension one.

\begin{main}\label{M5}
	Let $\mathcal{Z}$ be as above. If $\Phi^T_{t_0}$ has a transversal homoclinic point in $\mathcal{U}\setminus\Omega$, then for all $t\in\mathbb{S}^1$ except finitely many, $\Phi^T_t$ also has a transverse homoclinic point.
\end{main}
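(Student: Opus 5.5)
The plan is to transport the homoclinic structure of $\Phi^T_{t_0}$ to $\Phi^T_t$ by means of the conjugation $h_t:=\Phi^{t-t_0}_{t_0}$, which satisfies $h_t\circ\Phi^T_{t_0}=\Phi^T_t\circ h_t$. Let $p$ be the hyperbolic fixed point of $\Phi^T_{t_0}$ and $q$ the transverse homoclinic point, both in $\mathcal{U}\setminus\Omega$. The natural candidates for $\Phi^T_t$ are $p_t:=h_t(p)$ and $q_t:=h_t(q)$. By conjugacy, $p_t$ is a fixed point of $\Phi^T_t$, $h_t(W^{s,u}(p))=W^{s,u}(p_t)$, and $q_t\in W^s(p_t)\cap W^u(p_t)$ with $q_t\neq p_t$.

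The key step is to decide when $h_t$ is a $C^1$-diffeomorphism near $p$ and near $q$. By Lemma~\ref{P2}, applied to the autonomous field in $\mathbb{S}^1\times\mathbb{R}^n$, this holds as soon as $(t_0,p)$ and $(t_0,q)$ lie off $\Omega$ (true by hypothesis), the trajectory arcs cross $\Omega$ only at crossing points (true since $\mathcal{Z}$ is free of sliding in $\mathcal{U}$), and the endpoints $\varphi(t-t_0,t_0,p)$ and $\varphi(t-t_0,t_0,q)$ lie off $\Omega$. The periodic orbit through $p$ in the extended phase space is a closed curve of length one period. By Lemma~\ref{P1} it meets $\Omega$ in finitely many points, so only finitely many $t\in\mathbb{S}^1$ put $p_t$ on $\Omega$. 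The same holds for the orbit of $q$ on the interval $[t_0,t_0+T]$. Excluding these finitely many times, $h_t$ is $C^1$ with $C^1$ inverse near $p$ and near $q$.

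For the remaining $t$, differentiate the conjugacy at $p$ to get $D\Phi^T_t(p_t)=Dh_t(p)\,D\Phi^T_{t_0}(p)\,Dh_t(p)^{-1}$. The spectrum is unchanged, so $p_t\in\mathcal{U}\setminus\Omega$ is hyperbolic. Since $h_t$ is a $C^1$-diffeomorphism near $q$ and carries the local pieces of $W^s(p)$ and $W^u(p)$ through $q$ onto the corresponding pieces of $W^{s,u}(p_t)$ through $q_t$, transversality is preserved: $T_{q_t}W^s(p_t)+T_{q_t}W^u(p_t)=Dh_t(q)\big(T_qW^s(p)+T_qW^u(p)\big)=\mathbb{R}^n$. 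Hence $q_t$ is a transverse homoclinic point of $\Phi^T_t$.

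The main obstacle is the finiteness claim: Lemma~\ref{P1} must be applied to the autonomous extended field, and one must check that both orbits stay in $\mathcal{U}$, so that only crossing points are met. A second point needs care. The stable and unstable manifolds of $p_t$ are defined through $W^{s,u}_{loc}$ and the Stable Manifold Theorem near $p_t$, so one must check that $h_t$ maps them correctly even though $h_t$ may fail to be $C^1$ elsewhere. This follows from $h_t$ being a homeomorphism by Lemma~\ref{P3}, together with the characterization of the manifolds as sets of points asymptotic to $p_t$.
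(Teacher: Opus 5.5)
Your proposal is correct and follows the same route as the paper: conjugate by $\Phi^{t-t_0}_{t_0}$, use Lemma~\ref{P2} to get a local $C^1$-diffeomorphism whenever the transported point lies off $\Omega$, and use Lemma~\ref{P1} to see that this fails for only finitely many $t$. You are more careful than the paper, which tracks only $q_t$; you also exclude the finitely many $t$ with $p_t\in\Omega$, which is needed for $p_t$ to qualify as a hyperbolic fixed point in the paper's sense, and you check explicitly that hyperbolicity and transversality are carried over by $Dh_t$.
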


\begin{proof}
	Let $q_0=(t_0,x_0)$ be the homoclinic point of $\Phi^T_{t_0}$, and for each $t\in\mathbb{S}^1$ let $q_t=\Phi^{t-t_0}_{t_0}(x_0)$. Since $q_0\not\in\Omega$, it follows from Lemma~\ref{P2} that if $q_t\not\in\Omega$, then $\Phi^{t-t_0}_{t_0}$ is a $C^1$-diffeomorphism in a neighborhood of $q_0$. From Lemma~\ref{P1} we have that there are at most finitely many $t_1,\dots,t_k$ such that $q_{t_i}\in\Omega$, proving the theorem.
\end{proof}

In particular, it follows from Theorem~\ref{M5} that if $\mathcal{Z}$ has a hyperbolic $T$-periodic orbit $\gamma\subset\mathcal{U}\setminus\Omega$, and for some level $t=t_0$ the hyperbolic point $p_0:=\gamma\cap\{t=t_0\}$ has a transversal homoclinic point in relation to the map $\Phi^T_{t_0}$, then the stable and unstable manifolds of $\gamma$ also get entangled, similarly to a lift of Figure~\ref{Fig6}$(b)$. See Figure~\ref{Fig13} and also~\cite[Figure~$3.40$]{Arrow}.
\begin{figure}[ht]
	\begin{center}
		\begin{overpic}[height=7.5cm]{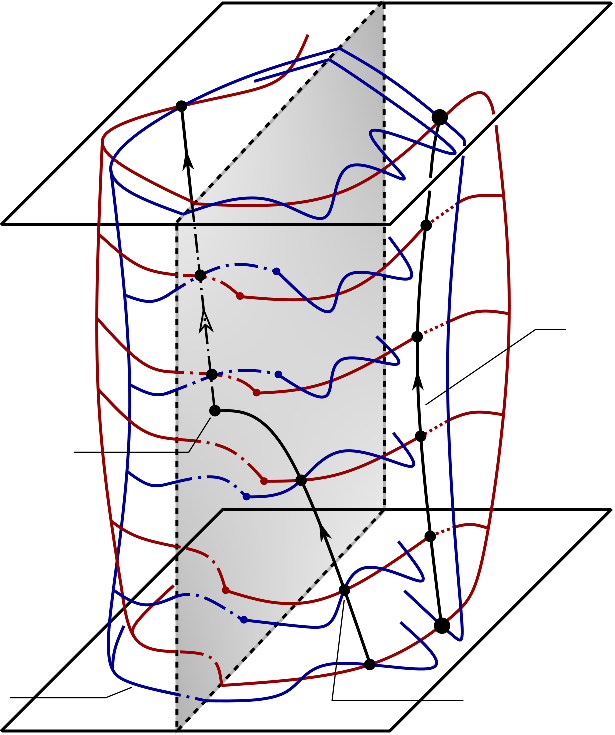} 
		%\begin{overpic}[height=7.5cm,grid,tics=5]{Fig16.eps}
			\put(78,54.25){$\gamma$}
			\put(19.5,1){$\Omega$}
			\put(81,24){$t=0$}
			\put(81,93){$t=T$}
			\put(69,38){$W^u(\gamma)$}
			\put(-13.5,3.75){$W^s(\gamma)$}
			\put(64,4){$q_0$}
			\put(5.5,37.5){$q_t$}
		\end{overpic}
	\end{center}
	\caption{Illustration of the entanglement of $\gamma$.}\label{Fig13}
\end{figure}
We finish this section with an example.

\begin{example}
	Consider the planar piecewise smooth system $\mathcal{Z}=(\mathcal{X}_1,\mathcal{X}_2,\Omega)$ given by $\Omega=\{(x,y)\in\mathbb{R}^2\colon y=0\}$ and
	\[
		\mathcal{Z}(t,x,y;\varepsilon)=\left\{\begin{array}{ll}
			\displaystyle \left(-\frac{4}{5}y,\; x+\varepsilon\sin t\right), & \text{if } y\geqslant0, 
			\vspace{0.1cm} \\
			\displaystyle \left(\frac{1}{3}-\frac{1}{3}y-\frac{1}{3}y^2,\; x+\varepsilon\sin t\right), & \text{if } y\leqslant0.
		\end{array}\right.
	\]
	It was proved in~\cite[Section~$4$]{Kupper} that $\mathcal{Z}(t,x,y;0)$ has a hyperbolic saddle point $p=(0,-(1+\sqrt{5})/2)$	with a piecewise smooth homoclinic point. Moreover, it is proved by Melnikov method that $\mathcal{Z}(t,x,y;\varepsilon)$ has a transverse homoclinic point for $\varepsilon>0$ small enough. Therefore, it follows from Theorem~\ref{M4} that $\mathcal{Z}_\varepsilon$ is chaotic. Furthermore, we have from Theorem~\ref{M5} that the hyperbolic orbit $\gamma_\varepsilon$ obtained from $p$ has an entanglement between its stable and unstable manifolds.
\end{example}

\section{Proof of the technical lemmas}\label{Sec5}

\begin{proof}[Proof of Lemma~\ref{P1}]
	Suppose that $\{\varphi(t,x):t\in[0,T]\}\cap\Sigma$ is not finite. Then there is an increasing sequence $(t_k)\subset[0,T]$ such that $\varphi(t_k,x)\in\Sigma$ for every $k\in\mathbb{N}$. Let 
	\[
		t^*:=\lim\limits_{k\to\infty}t_k=\sup\{t_k\colon k\in\mathbb{N}\}<T,
	\]
	and observe that $t_k-t_{k-1}\to0$ as $k\to\infty$. Let $x_k=\varphi(t_k,x)$, $k\in\mathbb{N}$, and $x^*=\varphi(t^*,x)$. Since $\Sigma$ is closed, it follows that $x^*\in\Sigma$ and thus by hypothesis it is also a crossing point.	Observe that we can take a local system of coordinates in a neighborhood $W\subset\mathbb{R}^n$ of $x^*$ such that in $W$ we have $\Sigma=h^{-1}(\{0\})$ and $Z=(X^+,X^-)$, with $X^\pm$ defined in a neighborhood of the region $\Sigma^\pm=\{x\in\mathbb{R}^n\colon \pm h(x)\geqslant0\}$,	for some $C^1$-function $h\colon W\to\mathbb{R}$. Suppose for definiteness that the trajectory of $Z$ through $x^*$ enters $\Sigma^-$. Restricting $W$ if necessary, we can assume that the trajectory of $Z$ through every $r\in W\cap\Sigma$ also enters $\Sigma^-$, see Figure~\ref{Fig5}.
	\begin{figure}[ht]
		\begin{center}
			\begin{overpic}[height=3cm]{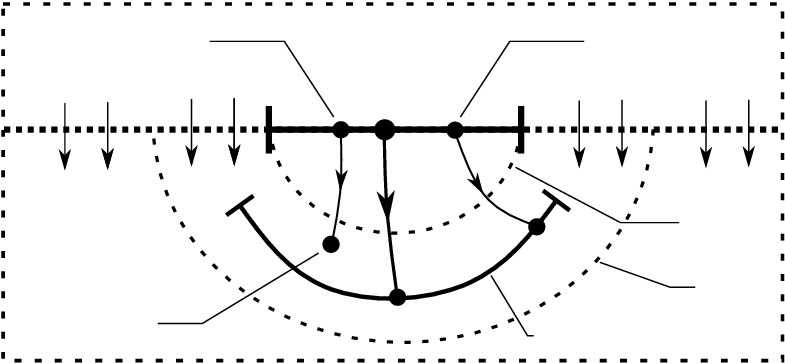} 
			%\begin{overpic}[height=3cm,grid,tics=5]{Fig6x.eps} 
				\put(100.5,44){$W$}
				\put(1,31){$\Sigma$}
				\put(5,13){$\Sigma^-$}
				\put(48,40){$\Sigma^+$}
				\put(48,32){$x^*$}
				\put(20.5,40){$x_k$}
				\put(8.5,4){$x_{k+1}$}
				\put(75.5,39.75){$r$}
				\put(86,16){$A$}
				\put(89,8){$U$}
				\put(68.5,2.5){$\varphi^-(\eta,A\cap\Sigma)$}
			\end{overpic}
		\end{center}
		\caption{Illustration of the proof of Lemma~\ref{P1}.}\label{Fig5}
	\end{figure}
	Let $\varphi^-(\tau,r)$ be the solution of $X^-$ with initial condition $\varphi^-(0,r)=r$. Given a neighborhood $U\subset W$ of $x^*$, it follows from the continuous dependence on the initial conditions~\cite[Theorems $8$ and $9$]{Andronov} that there is a neighborhood $A\subset U$ of $x^*$ and $\eta>0$ such that if $(\tau,r)\in[0,\eta]\times( A\cap\Sigma^-)$, then $\varphi^-(\tau,r)\in U\cap\Sigma^-$.	Restricting $A$ if necessary, we have from the Flow Box Theorem~\cite[Theorem~$1.12$]{DumLliArt2006} that if $(\tau,r)\in(0,\eta]\times (A\cap\Sigma)$, then $\varphi^-(\tau,r)\in (U\cap\Sigma^-)\setminus\Sigma$. Since $x_k\to x^*$ and $t_k-t_{k-1}\to0^+$, it follows that for $k\in\mathbb{N}$ big enough we have $x_k\in A\cap\Sigma$ and $t_{k+1}-t_k<\eta$, which implies that $x_{k+1}\in(U\cap\Sigma^-)\setminus\Sigma$, contradicting the fact that $x_{k+1}\in\Sigma$ is a crossing point.
\end{proof}

\begin{proof}[Proof of Lemma~\ref{P2}]
	It follows from Lemma~\ref{P1} that there are at most finitely many $t_1,\dots,t_k\in(0,T)$ such that $\varphi(t_i,x)\in\Sigma$. We assume for the moment $k=1$. In this case, we can also assume that there exists a $C^1$-function $h\colon\mathbb{R}^n\to\mathbb{R}$ such that $\Sigma=h^{-1}(\{0\})$ and $Z=(X^+,X^-)$, with $X^\pm$ defined in a neighborhood of the region $\Sigma^\pm=\{x\in\mathbb{R}^n\colon \pm h(x)\geqslant0\}$, with the corresponding solution denoted by $\varphi^\pm(\tau,r)$. Suppose for definiteness that $x\in\Sigma^+$ and $\varphi(T,x)\in\Sigma^-$, see Figure~\ref{Fig1}. 
	\begin{figure}[ht]
		\begin{center}
			\begin{overpic}[width=4cm]{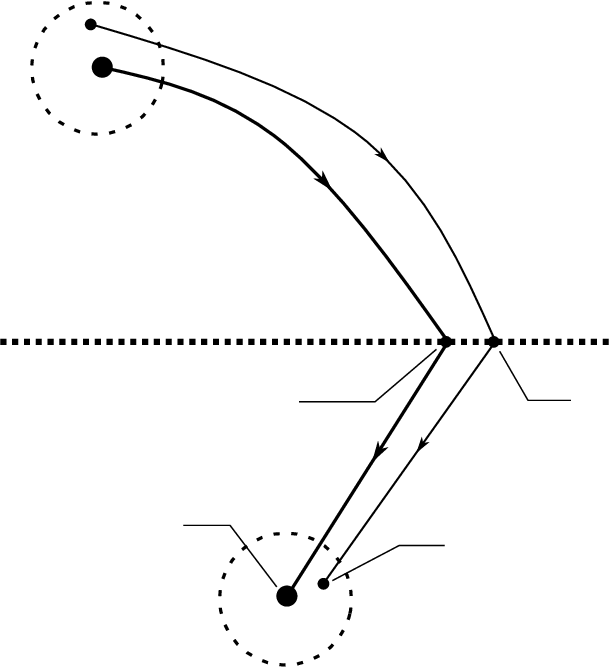} 
			%\begin{overpic}[width=4cm,grid,tics=5]{Fig1.eps} 
				\put(-2,50){$\Sigma$}
				\put(20,77){$A$}
				\put(11,84){$x$}
				\put(9,92){$r$}
				\put(68,15.5){$\Phi^T(r)$}
				\put(5,19){$\Phi^{T}(x)$}
				\put(87,38.5){$\varphi(\tau(r),r)$}
				\put(18,37.75){$\varphi(t_1,x)$}
				\put(52,-1){$\Phi^T(A)$}
			\end{overpic}
		\end{center}
		\caption{Illustration of the proof of Lemma~\ref{P2}.}\label{Fig1}
	\end{figure}	
	Since $X^+$ is defined in a neighborhood of $\Sigma^+$, there exists $\eta>0$ sufficiently small such that $\varphi^+(\tau,r)$ is well defined for $\tau\in I:=(-\eta,t_1+\eta)$. Hence, it follows from the continuous dependence on the initial conditions that there is a neighborhood $A\subset\mathbb{R}^n\setminus\Sigma$ of $x$ such that $\varphi^+(\tau,r)$ is well defined for every $(\tau,r)\in I\times A$. Let $\Psi\colon I\times A\to\mathbb{R}$ be given by $\Psi(t,r)=h(\varphi^+(t,r))$. Notice that $\Psi$ is well defined and of class $C^1$ in $I\times A$. Moreover, 
	\begin{equation}\label{1}
		\partial_t\Psi(t,r)=\left<\nabla h(\varphi^+(t,r)),X^+(\varphi^+(t,r))\right>,
	\end{equation}
	where we have used that $\partial\varphi^+/\partial t=X^+\circ\varphi^+$. Thus, equation~\eqref{1} in addition with conditions~\ref{ii} and~\ref{iii} from definition of crossing points lead us to $\Psi(t_1,x)=0$ and $\partial_t\Psi(t_1,x)\neq0$. Therefore, it follows from the Implicit Function Theorem that there is a $C^1$-function $\tau\colon A\to\mathbb{R}$, with $\tau(x)=t_1$, such that $\Psi(\tau(r),r)\equiv0$. Observe that $\tau(r)$ is exactly the time necessary for a point $r\in A$ to reach $\Sigma$. Restricting $A$ if necessary, we can assume that there is $\tau_0>0$ such that $\tau(r)\leqslant\tau_0<T$ for every $r\in A$. This implies that, for $r\in A$, the map $\Phi^T$ can be written as
	\[
		\Phi^T(r)=\varphi^-\big(T-\tau(r),\varphi^+(\tau(r),r)\big).
	\]
	In particular, $\Phi^T$ is $C^1$ when restricted to $A$. By reversing the time, we observe that the endpoint $\Phi^{T}(x)$ satisfy the same hypothesis as $x$ and thus we can also prove that $\Phi^{-T}$ is $C^1$ in a neighborhood of $\Phi^{T}(x)$. This concludes the proof, since $\Phi^{T}$ and $\Phi^{-T}$ are inverses one of each other.
	
	In order to prove the general case (i.e., $k>1$), let us set $T_0=0$ and $T_k=T$, and for each $i\in\{1,\dots,k-1\}$, choose $T_i\in(t_i,t_{i+1})$. Also, let $x_i=\varphi(T_i,x)$ for $i\in\{0,\dots,k-1\}$. Analogously to the previous reasoning, the map $\Phi^{T_{i+1}-T_i}$ is a $C^1$-difeomorphism in a neighborhood of $x_i$, ${i\in\{0,\dots,k-1\}}$. The proof now follows from the fact that $\Phi^T=\Phi^{T_k-T_{k-1}}\circ\dots\circ\Phi^{T_1-T_0}$.
\end{proof}

\begin{proof}[Proof of Lemma~\ref{P3}]
	If $\{\varphi(t,x):t\in[0,T]\}$ never intersects $\Sigma$ then there is nothing to prove. On the other hand, suppose $\{\varphi(t,x):t\in[0,T]\}\cap\Sigma\neq\emptyset$. If $x\not\in\Sigma$ and $\varphi(T,x)\not\in\Sigma$, then the result follows from Lemmas~\ref{P1} and~\ref{P2}. Hence we need to work the cases in which $x\in\Sigma$ or $\varphi(T,x)\in\Sigma$.
	
	First we suppose $x\not\in\Sigma$ and $\varphi(T,x)\in\Sigma$. For simplicity, we can also assume $\varphi(t,x)\not\in\Sigma$ for every $t\in(0,T)$. Similarly to the proof of Lemma~\ref{P1}, there is a neighborhood $A\subset\mathbb{R}^n\setminus\Sigma$ of $x$ and a $C^1$-function $\tau\colon A\to\mathbb{R}$, with $\tau(x)=T$, such that $\varphi(\tau(r),r)\in\Sigma_c$ for every $r\in A$. Hence, $\Phi^T|_A$ can be written as
	\[\Phi^T|_A(r)=\left\{\begin{array}{ll}
		\varphi^-\big(T-\tau(r),\varphi^+(\tau(r),r)\big) & \text{if } \tau(r)\leqslant T, \\
		\varphi^+(T,r) & \text{if } \tau(r)\geqslant T,
	\end{array}\right.\]
	where $\varphi^\pm(\tau,r)$ are the solution of $X^\pm$, see Figure~\ref{Fig2}$(a)$.
	\begin{figure}[ht]
		\begin{center}
			\begin{minipage}{6.25cm}
				\begin{center} 
					\begin{overpic}[width=5cm]{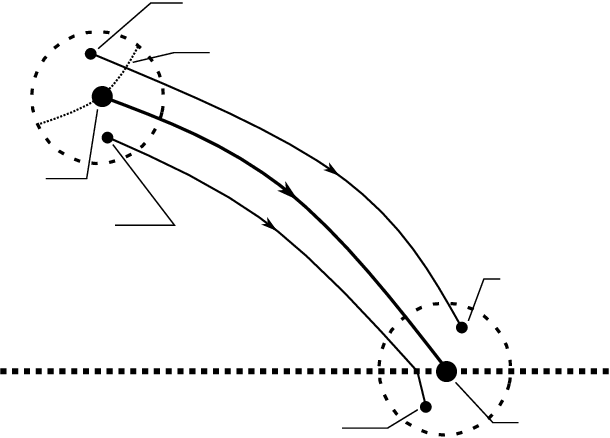} 
					%\begin{overpic}[width=5cm,grid,tics=5]{Fig2.eps} 
						\put(-2,13){$\Sigma$}
						\put(31,69.5){$r_1$}
						\put(36,61.5){$\tau^{-1}(T)$}
						\put(2.5,40.5){$x$}
						\put(11,33.5){$r_2$}
						\put(83,23.5){$\Phi^T(r_1)$}
						\put(86,0){$\Phi^T(x)$}
						\put(32,0){$\Phi^T(r_2)$}
					\end{overpic}
					
					$\;$
					
					$(a)$
				\end{center}
			\end{minipage}
			\begin{minipage}{6.25cm}
				\begin{center} 
					\begin{overpic}[width=5cm]{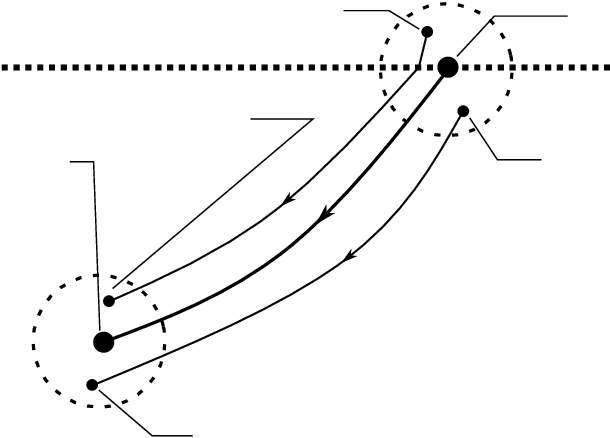} 
					%\begin{overpic}[width=5cm,grid,tics=5]{Fig3.eps} 
						\put(-2,63){$\Sigma$}
						\put(49,69){$r_1$}
						\put(94,67.5){$x$}
						\put(90,44){$r_2$}
						\put(17,50){$\Phi^T(r_1)$}
						\put(33,-2){$\Phi^T(r_2)$}
						\put(-9.5,43){$\Phi^T(x)$}
					\end{overpic}
					
					$\;$
					
					$(b)$
				\end{center}
			\end{minipage}
		\end{center}
		\caption{Illustration of the proof of Lemma~\ref{P3}.}\label{Fig2}
	\end{figure}
	Thus, we have from the Pasting Lemma~\cite[Theorem $18.3$]{Munkres} that $\Phi^T$ is continuous in $A$. The case $x\in\Sigma$ and $\varphi(T,x)\not\in\Sigma$ follows similarly, with the difference that $\tau(x)=0$, leading to
	\[\Phi^T|_A(r)=\left\{\begin{array}{ll}
		\varphi^-\big(T-\tau(r),\varphi^+(\tau(r),r)\big) & \text{if } \tau(r)\geqslant 0, \\
		\varphi^-(T,r) & \text{if } \tau(r)\leqslant 0,
	\end{array}\right.\]
	where $A$ is restricted so that $|\tau(r)|\leqslant\tau_0<T$ for every $r\in A$, see Figure~\ref{Fig2}$(b)$. The case where $x\in\Sigma$ and $\varphi(T,x)\in\Sigma$ follows similarly. Observe that $\Phi^{-T}$ satisfies similar hypothesis and thus can also be proved continuous in a neighborhood of $\varphi(T,x)$. The result now follows from the fact that $\Phi^{-T}$ is the inverse of $\Phi^T$. The general case in which there are finitely many $t_1,\dots,t_k\in(0,T)$ such that $\varphi(t_k,x)\in\Sigma$ is analogous to end of the proof of Lemma~\ref{P2}.
\end{proof}

\appendix

\section{Moser's Theorem for higher dimensions}\label{AppA}

We now comment on the extension of Moser's Theorem for higher dimensions. The main difference is that we use the $\lambda$-Lemma instead of the technical Lemma~\cite[p.~$182$]{Moser}. More precisely, let $U\subset\mathcal{U}\setminus\Omega$ be a small enough neighborhood of the hyperbolic fixed point $p\in\mathcal{U}\setminus\Omega$ for which we can assume coordinates $(x_u,x_s)$ from the Stable Manifold Theorem. For definiteness, we suppose $W^s(p)\cap U$ is given by $x_u=0$, while $W^u(p)\cap U$ is given by $x_s=0$. See Figure~\ref{Fig4}.
\begin{figure}[ht]
	\begin{center}
		\begin{overpic}[width=6cm]{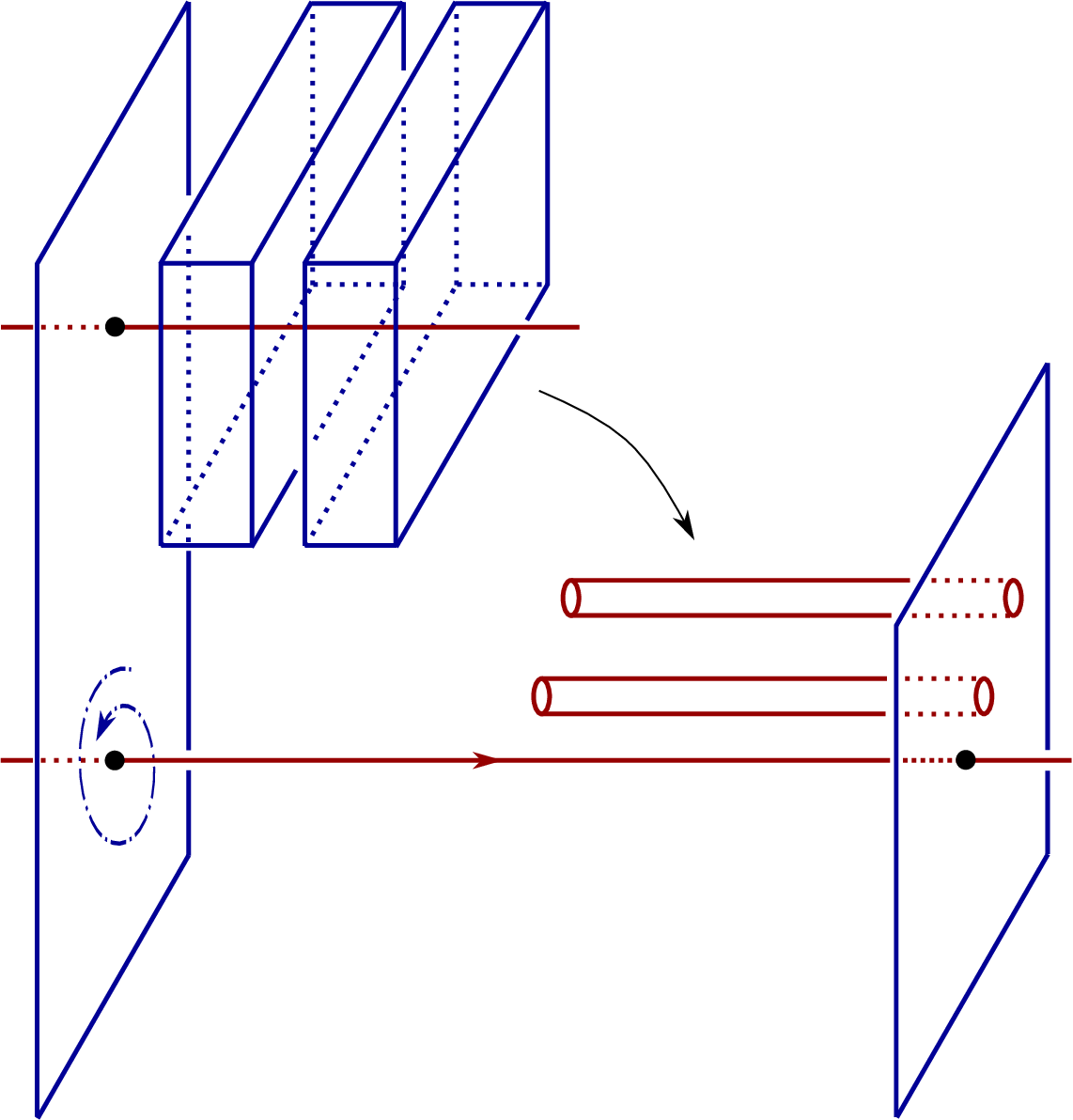} 
		%\begin{overpic}[width=6cm,grid,tics=5]{Fig23.eps} 
			\put(10,28){$p$}
			\put(50,27){$W^u(p)$}
			\put(-5,90){$W^s(p)$}
			\put(9,66.5){$q_\ell$}
			\put(86,28){$q_m$}
			\put(56.5,60.25){$\Psi$}
			
			\put(28,44.5){$\widetilde{V}_1$}
			\put(18,44.5){$\widetilde{V}_2$}
			
			\put(44,44.5){$\widetilde{U}_1$}
			\put(41,35.5){$\widetilde{U}_2$}
		\end{overpic}
	\end{center}
\caption{Illustration of the proof of Theorem~\ref{M3} for higher dimensions.}\label{Fig4}
\end{figure}
Let $R\subset\mathcal{U}\setminus\Omega$ be a small enough neighborhood of the transverse hyperbolic point $q\in\mathcal{U}\setminus\Omega$. Let $\ell$, $m\in\mathbb{N}$ be big enough such that $F^\ell(R)$, $F^{-m}(R)\subset U$. Let $q_\ell=F^\ell(q)$ and $q_m=F^{-m}(q)$.

Let $B_u\subset\mathbb{R}^u$ be a small enough neighborhood of the origin, and $\Gamma\colon B_u\to\mathbb{R}^u\times\mathbb{R}^s$ a local parametrization of $W^u(p)$ around $q_\ell$, with $\Gamma=(\gamma_u,\gamma_s)$ and $\Gamma(0)=q_\ell$. It follows from the $\lambda$-Lemma that if we take $\ell$ big enough, then $D\gamma_u(0)$ is non-singular. Hence, it follows from the Local Immersion Theorem~\cite[p.~$15$]{GuiPol1974} that there is a local $C^1$-change of coordinates around $q_\ell$ such that in these new coordinates $(w_u,w_s)$, $W^u(p)$ is given by $w_s=0$. We then use this new coordinate system to create ``vertical strips'' $\widetilde{V}_i$ that converges to $W^s(p)$. This can be done by taking a sequence $(r_i)\subset W^u(p)$, $r_i\to q_\ell$, $r_i=(s_i,u_i)$, and defining $\widetilde{V}_i=\{(w_s,w_u)\colon w_u\in B(u_i,\varepsilon_i)\}$, with $\varepsilon_i\to0^+$, and $B(r_i,\varepsilon_i)$ denoting the $u$-dimensional open ball centered at $u_i$ and of radius $\varepsilon_i>0$. We now notice that the iterations of $\widetilde{V}_i$ are contracted in the stable direction and expanded in the unstable one. This can be made precise with the definition of the stable (resp. unstable) distance $d_s$ (resp. $d_u$), see~\cite[p.~$230$]{Newhouse}. The proof now follows similarly to the two-dimensional case, with the definition of the ``horizontal strips'' $\widetilde{U}_i$, and then the map $\Psi$. The last observation to be made is that if $W^s(p)$ is of codimension one, then it separates $U$ in the regions $x_u>0$ and $x_u<0$, and thus a suitable side must be chosen for the sequence $(r_i)\subset W^u(p)$ (the one containing $q_m$). In the particular case dimension two, both $W^{s,u}(p)$ are of codimension one, and thus quadrant must be chosen.

\section*{Acknowledgments}

This work is supported by  S\~ao Paulo Research Foundation (FAPESP), grants 2021/01799-9, 2023/02959-5 and 20\-24/15612-6; CNPq, grants 401974/2025-1 and 307706/2023-0;  Capes, grant 88881.179491/2025-01, and Agence Nationale de la Recherche (ANR), project ANR-23-CE40-0028.

\end{document}